\DeclareFontFamily{OT1}{pzc}{}
\DeclareFontShape{OT1}{pzc}{m}{it}{<-> s * [1.10] pzcmi7t}{}
\DeclareMathAlphabet{\mathpzc}{OT1}{pzc}{m}{it}
\let\originalleft\left
\let\originalright\right
\renewcommand{\left}{\mathopen{}\mathclose\bgroup\originalleft}
\renewcommand{\right}{\aftergroup\egroup\originalright}
\newcommand{\myStep}[2]{{\bf Step #1} --- #2\\}
\newcommand{\bM}{{\bf M}}
\newcommand{\bO}{{\bf 0}}
\newcommand{\co}{\mathpzc{o}}
\newcommand{\ee}{\varepsilon}
\newcommand{\rD}{{\rm D}}
\newcommand{\re}{{\rm e}}
\newcommand{\sect}{section~}
\newtheorem{theorem}{Theorem}[section]
\newtheorem{lemma}[theorem]{Lemma}
\theoremstyle{definition}
\newtheorem{definition}{Definition}[section]
\begin{document}

\title{
Inclusion of higher-order terms in the border-collision normal form: persistence of chaos and applications to power converters.}
\author{
D.J.W.~Simpson$^{\dagger}$ and P.A.~Glendinning$^{\ddagger}$\\
\small $^{\dagger}$School of Fundamental Sciences, Massey University, Palmerston North, New Zealand\\
\small $^{\ddagger}$Department of Mathematics, University of Manchester, UK
}
\maketitle

\begin{abstract}

The dynamics near a border-collision bifurcation are approximated to leading order by a continuous, piecewise-linear map.
The purpose of this paper is to consider the higher-order terms that are neglected when forming this approximation.
For two-dimensional maps we establish conditions under which a chaotic attractor created in a border-collision bifurcation
persists for an open interval of parameters beyond the bifurcation.
We apply the results to a prototypical power converter model to prove the model exhibits robust chaos.

\end{abstract}

\section{Introduction}
\label{sec:intro}
\setcounter{equation}{0}

While most engineering systems are designed to operate outside chaotic parameter regimes,
in some situations the presence of chaos is advantageous.
Examples include optical resonators for which
additional energy can be stored when photons follow chaotic trajectories
as without periodic motion they become trapped for longer times \cite{LiDi13}.
In mechanical energy harvesters the presence of chaos allows
high-energy modes of operation to be stabilised with only a small control force \cite{KuAl16}.
Also, power converters, when run chaotically, have the advantage of
increased electromagnetic compatibility due to broad spectral characteristics \cite{DeHa96}.
Regardless of whether or not chaos is desired, it is extremely helpful to understand when and why it occurs.

Power converters, and many other engineering systems,
function by switching between different modes of operation.
There is a growing understanding of the creation of chaos in such systems through use of the border-collision normal form (BCNF).
This is a piecewise-linear (or more precisely, piecewise-affine) map that models the
dynamics near parameter values at which
a fixed point intersects a boundary between two modes of operation \cite{NuYo92,DiBu08}.
The piecewise-linear nature of the BCNF makes it possible to prove results about chaotic attractors and their persistence,
a phenomenon called robust chaos \cite{BaYo98,BaGr99,Gl17}.

The BCNF is obtained using coordinate transformations to make the lowest order terms of the map as simple as possible.
The map is then truncated, ignoring all terms that are nonlinear with respect to variables and parameters.
As such, it is by no means clear that results for chaotic attractors in the BCNF
carry over to the full nonlinear models originally being considered.
In this paper we prove a persistence result showing that, under certain conditions,
the existence of a chaotic attractor in the BCNF implies the existence of a chaotic attractor in the corresponding full model
regardless of the nature of the nonlinear terms that have been neglected to form the BCNF.
This shows that chaotic attractors created in border-collision bifurcations
typically persist for an open interval of parameters beyond the bifurcation
and justifies our use of the BCNF for determining when chaotic attractors are created.

In earlier work \cite{GlSi21b} we described a computational method for determining
when the two-dimensional BCNF is chaotic by establishing the existence of a trapping region in phase space
and a contracting-invariant expanding cone in tangent space.
The trapping region guarantees the existence of an attractor,
while the cone ensures it has a positive Lyapunov exponent.
In this paper we use the robustness of these objects to demonstrate persistence with respect to higher-order terms.
The most difficult technical issue we have to overcome
is in showing that the higher-order terms do not cause orbits of the map
to accumulate new symbolic itineraries that cannot be handled by the cone.

The remainder of the paper is arranged as follows.
In \sect\ref{sec:bcnf} we describe the two-dimensional BCNF and its relation to nonlinear models.
We then state our persistence result in \sect\ref{sec:main}.
In \sect\ref{sec:leftright} we provide a detailed description of the geometric structure of the phase space of the BCNF
that we use in \sect\ref{sec:proof} to prove the persistence result.
The result, together with computational methods of \cite{GlSi21b},
are then applied to a model of power converters with pulse-width modulated control, \sect\ref{sec:application}.
We believe this verifies, for the first time, the long-standing belief from numerical simulations that chaos
occurs robustly in these types of systems.
Finally \sect\ref{sec:conc} contains concluding remarks.

\section{Border-collision bifurcations and the border-collision normal form}
\label{sec:bcnf}
\setcounter{equation}{0}

The study of border-collision bifurcations has a long history dating
back to at least Feigin and coworkers in the context of relay control \cite{BrNe63,Fe70z}.
The two-dimensional BCNF was introduced
by Nusse and Yorke in \cite{NuYo92}
motivated by observations of anomalous behaviour in piecewise-linear economics models \cite{HoNu91}.
The BCNF has since been shown to display a remarkably rich array of dynamics,
such as robust chaos \cite{BaYo98},
multi-stability \cite{DuNu99},
multi-dimensional attractors \cite{Gl16e},
and resonance regions with sausage-string structures \cite{SiMe09,Si17c}.
The BCNF is relevant for describing a wide-range of physical phenomena,
another example being mechanical systems with stick-slip friction \cite{DiKo03};
see \cite{Si16} for a recent review.

Let $y \mapsto f(y;\mu)$ be a continuous, piecewise-smooth map with variable $y = (y_1,y_2) \in \mathbb{R}^2$
and parameter $\mu \in \mathbb{R}$.
We are interested in the dynamics local to a border-collision bifurcation
where a fixed point of $f$ collides with a switching manifold as $\mu$ is varied.
Assuming the bifurcation occurs at single smooth switching manifold,
only two pieces of the map are relevant to the local dynamics.
By choosing coordinates so that the switching manifold is $y_1 = 0$, we can assume $f$ has the form
\begin{equation}
f(y;\mu) = \begin{cases}
f_L(y;\mu), & y_1 \le 0, \\
f_R(y;\mu), & y_1 \ge 0,
\end{cases}
\label{eq:f}
\end{equation}
where $f_L$ and $f_R$ are $C^1$.

Suppose the border-collision bifurcation occurs at $y = (0,0) = \bO$ when $\mu = 0$.
By continuity, $\bO$ is a fixed point of both $f_L$ and $f_R$ when $\mu = 0$:
\begin{equation}
f_L(\bO;0) = f_R(\bO;0) = \bO.
\label{eq:bcbCond}
\end{equation}
From the map $f$ we can extract the four key values
\begin{equation}
\begin{split}
\tau_L &= {\rm trace} \big( \rD f_L(\bO;0) \big), \\
\delta_L &= {\rm det} \big( \rD f_L(\bO;0) \big), \\
\tau_R &= {\rm trace} \big( \rD f_R(\bO;0) \big), \\
\delta_R &= {\rm det} \big( \rD f_R(\bO;0) \big),
\end{split}
\label{eq:tLdLtRdR}
\end{equation}
which determine the eigenvalues
associated with $\bO$
for the two smooth components of \eqref{eq:f}.
We can then use these values to form the piecewise-linear map
\begin{equation}
g(x) = \begin{cases}	
\begin{bmatrix} \tau_L & 1 \\ -\delta_L & 0 \end{bmatrix} x + \begin{bmatrix} 1 \\ 0 \end{bmatrix}, & x_1 \le 0, \\[4.5mm]
\begin{bmatrix} \tau_R & 1 \\ -\delta_R & 0 \end{bmatrix} x + \begin{bmatrix} 1 \\ 0 \end{bmatrix}, & x_1 \ge 0.
\end{cases}
\label{eq:bcnf}
\end{equation}
This is the two-dimensional BCNF,
except often $\mu$-dependence is retained in the constant term.
In the remainder of this section we explain how \eqref{eq:bcnf} can be used
to describe the local dynamics of \eqref{eq:f} near the border-collision bifurcation.

We first write the components of \eqref{eq:f} as
\begin{equation}
\begin{split}
f_L(y;\mu) &=
\begin{bmatrix}
a^L_{11} & a_{12} \\
a^L_{21} & a_{22}
\end{bmatrix} y + \begin{bmatrix} b_1 \\ b_2 \end{bmatrix} \mu + \co \left( \| y \| + |\mu| \right), \\
f_R(y;\mu) &=
\begin{bmatrix}
a^R_{11} & a_{12} \\
a^R_{21} & a_{22}
\end{bmatrix} y + \begin{bmatrix} b_1 \\ b_2 \end{bmatrix} \mu + \co \left( \| y \| + |\mu| \right),
\end{split}
\label{eq:fLfR}
\end{equation}
where the two expressions share several coefficients
due to the assumed continuity of \eqref{eq:f} on $y_1 = 0$.
By dropping the higher-order terms we obtain the piecewise-linear map
\begin{equation}
h(y;\mu) = \begin{cases}
\begin{bmatrix} a^L_{11} & a_{12} \\ a^L_{21} & a_{22} \end{bmatrix} y +
\begin{bmatrix} b_1 \\ b_2 \end{bmatrix} \mu, & y_1 \le 0, \\[4.5mm]
\begin{bmatrix} a^R_{11} & a_{12} \\ a^R_{21} & a_{22} \end{bmatrix} y +
\begin{bmatrix} b_1 \\ b_2 \end{bmatrix} \mu, & y_1 \ge 0.
\end{cases}
\label{eq:fpwl}
\end{equation}
This approximates \eqref{eq:f} in the following sense:
for any $\ee > 0$ there exists $\delta > 0$ such that
if $\| y \| + |\mu| < \delta$ then
$\big\| f(y;\mu) - h(y;\mu) \big\| < \ee \left( \| y \| + |\mu| \right)$.

The piecewise-linear map \eqref{eq:fpwl} satisfies the identity
$h(\alpha y;\alpha \mu) = \alpha h(y;\mu)$ for any $\alpha > 0$.
For this reason the magnitude of $\mu$ only affects the spatial scale of the dynamics of \eqref{eq:fpwl}:
if $\Lambda \subset \mathbb{R}^2$ is an invariant set of $h$ for some $\mu_0$,
then $\alpha \Lambda$ is an invariant set of $h$ with $\mu = \alpha \mu_0$ for all $\alpha > 0$.
In view of this scaling property, we can convert \eqref{eq:fpwl} to \eqref{eq:bcnf} for any $\mu > 0$.
This is achieved via the change of variables
\begin{equation}
x = \frac{1}{\gamma \mu} \begin{bmatrix} 1 & 0 \\ -a_{22} & a_{12} \end{bmatrix} y
+ \frac{1}{\gamma} \begin{bmatrix} 0 \\ a_{22} b_1 - a_{12} b_2 \end{bmatrix},
\label{eq:ytox}
\end{equation}
where
\begin{equation}
\gamma = (1 - a_{22}) b_1 + a_{12} b_2 \,,
\label{eq:gamma}
\end{equation}
and is valid assuming
\begin{align}
a_{12} &\ne 0, \label{eq:transCond1} \\
\gamma &> 0. \label{eq:transCond2}
\end{align}
The condition $a_{12} \ne 0$ ensures \eqref{eq:ytox} is invertible
(if $a_{12} = 0$ then \eqref{eq:fpwl} can be partly decoupled \cite{Si16}).
We require $\gamma \ne 0$ so that $\mu$ unfolds the border-collision bifurcation in a generic fashion,
while $\gamma > 0$ ensures the left and right components of \eqref{eq:fpwl}
transform to their respective components in \eqref{eq:bcnf}.
The case $\gamma < 0$ can be accommodated by simply redefining $\mu$ as $-\mu$.

The transformation \eqref{eq:ytox} performs a similarity transform to the Jacobian matrices of the components of the map,
thus it preserves their traces and determinants.
For this reason the values \eqref{eq:tLdLtRdR} were used to construct \eqref{eq:bcnf} ---
notice how $\tau_L$, $\delta_L$, $\tau_R$, and $\delta_R$
are the traces and determinants of the Jacobian matrices of the two components of \eqref{eq:bcnf}.

In summary, for any map of the form \eqref{eq:f} that has a border-collision bifurcation at $\mu = 0$,
we can use the values \eqref{eq:tLdLtRdR} to form the BCNF \eqref{eq:bcnf}.
Then, if the conditions \eqref{eq:transCond1}--\eqref{eq:transCond2} are satisfied,
the BCNF is affinely conjugate to the piecewise-linear approximation to \eqref{eq:f} for any $\mu > 0$.
Intuitively this approximation should be reasonable for sufficiently small values of $\mu$.
Our persistence result in the next section gives conditions under which the approximation can indeed be justified.

\section{Persistence of chaotic attractors}
\label{sec:main}
\setcounter{equation}{0}

Our main result, Theorem \ref{th:main} below,
links the existence of chaotic attractors of the piecewise-linear BCNF \eqref{eq:bcnf}
to attractors of the nonlinear map \eqref{eq:f} for small $\mu > 0$.
To state the result we need some preliminary definitions and conditions.

Let
\begin{align*}
g_L(x) &= A_L x + \begin{bmatrix} 1 \\ 0 \end{bmatrix}, &
g_R(x) &= A_R x + \begin{bmatrix} 1 \\ 0 \end{bmatrix},
\end{align*}
denote the left and right components of the BCNF, $g$, where
\begin{align*}
A_L &= \begin{bmatrix} \tau_L & 1 \\ -\delta_L & 0 \end{bmatrix}, &
A_R &= \begin{bmatrix} \tau_R & 1 \\ -\delta_R & 0 \end{bmatrix}.
\end{align*}
Theorem \ref{th:main} requires the assumptions
\begin{align}
\delta_L &> 0, \label{eq:paramCond1} \\
\delta_R &> \frac{\tau_R^2}{4}, \label{eq:paramCond2} \\
g^{-i}(\bO)_2 &< 0, \quad \text{for all $i \ge 1$}, \label{eq:paramCond3}
\end{align}
where the subscript in \eqref{eq:paramCond3} indicates we are looking at the second component of the vector $g^{-i}(\bO)$.
Conditions \eqref{eq:paramCond1} and \eqref{eq:paramCond2} imply that $g$ is a homeomorphism,
so $g^{-1}$ exists and \eqref{eq:paramCond3} is well-defined.
Condition \eqref{eq:paramCond3} ensures the backwards orbit of the origin
does not enter the closed upper half-plane $\left\{ x \in \mathbb{R}^2 \,\big|\, x_2 \ge 0 \right\}$.
In fact, the backwards orbit is constrained to the fourth quadrant of $\mathbb{R}^2$
so is governed purely by $g_R$ and simply converges to the
unique fixed point of $g_R$ (a repelling focus), see Fig.~\ref{fig:zm_origin}.
Together conditions \eqref{eq:paramCond1}--\eqref{eq:paramCond3} allow us to divide
phase space by the number of iterations required to cross the switching manifold
\begin{equation}
\Sigma = \left\{ x \in \mathbb{R}^2 \,\big|\, x_1 = 0 \right\},
\nonumber
\end{equation}
and this is achieved in \sect\ref{sec:leftright}.
Having a precise understanding of this division is critical to our proof of Theorem~\ref{th:main}
presented in \sect\ref{sec:proof}.

\begin{figure}[b!]
\begin{center}
\includegraphics[height=4.5cm]{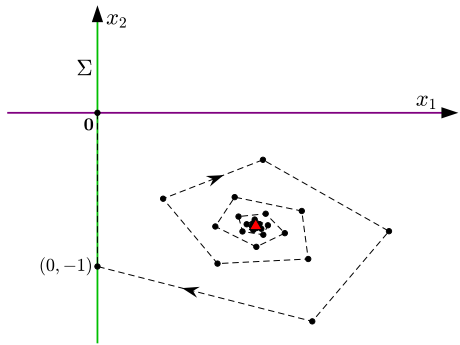}
\caption{
A sketch of the backwards orbit of the origin for the BCNF \eqref{eq:bcnf}
satisfying \eqref{eq:paramCond1}--\eqref{eq:paramCond3}.
The backwards orbit converges to the fixed point of $g_R$ (red triangle).
\label{fig:zm_origin}
}
\end{center}
\end{figure}

To motivate the following definitions,
consider the forward orbit of a point $x \in \mathbb{R}^2$ under $g$.
Suppose it maps under $g_L$ $p$ times, then under $g_R$ $q$ times.
That is, $g^{p+q}(x) = g_R^q \left( g_L^p(x) \right)$,
hence $\rD g^{p+q}(x) = A_R^q A_L^p$,
assuming no iterates lie on $\Sigma$ where $g$ is non-differentiable.
For orbits that go back and forth across $\Sigma$ without landing on $\Sigma$,
which includes almost all orbits in the chaotic attractors we wish to analyse,
derivatives of $g^n(x)$ for large $n$ can be expressed as products of matrices of the form $A_R^q A_L^p$.
Consequently we can estimate Lyapunov exponents based on bounds for the values of $p$ and $q$.

Let
\begin{align*}
\Pi_L &= \left\{ x \in \mathbb{R}^2 \,\big|\, x_1 \le 0 \right\}, \\
\Pi_R &= \left\{ x \in \mathbb{R}^2 \,\big|\, x_1 \ge 0 \right\},
\end{align*}
denote the closed left and right half-planes.

\begin{definition}
Given $x \in \mathbb{R}^2$, let $\chi_L(x)$ be the smallest $p \ge 1$ for which
$g^p(x) \notin \Pi_L$ and let $\chi_R(x)$ be the smallest $q \ge 1$ for which
$g^q(x) \notin \Pi_R$, if such $p$ and $q$ exist.
\label{df:chiLR}
\end{definition}

Now define the regions
\begin{align}
\Phi_L &= \left\{ x \in \mathbb{R}^2 \,\big|\, x_1 < 0,\, x_2 \le 0 \right\}, \label{eq:PhiL} \\
\Phi_R &= \left\{ x \in \mathbb{R}^2 \,\big|\, x_1 > 0,\, x_2 \ge 0 \right\}. \label{eq:PhiR}
\end{align}
The following result shows
that when an orbit crosses $\Sigma$
from right to left, it must arrive at a point in $\Phi_L$.
Moreover, $\Phi_L$ is exactly the set of all such points.
The set $\Phi_R$ admits the same characterisation for orbits crossing $\Sigma$ from left to right.
The result follows immediately from the observation that $g^{-1}(x)_1$ and $x_2$ have opposite signs.

\begin{lemma}
Suppose $\delta_L > 0$ and $\delta_R > 0$.
Then
\begin{align*}
\Phi_L &= \left\{ x \in \mathbb{R}^2 \setminus \Pi_R \,\middle|\, g^{-1}(x) \in \Pi_R \right\}, \\
\Phi_R &= \left\{ x \in \mathbb{R}^2 \setminus \Pi_L \,\middle|\, g^{-1}(x) \in \Pi_L \right\}.
\end{align*}
\label{le:crossingSigma}
\end{lemma}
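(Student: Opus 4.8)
The plan is to make precise the sign relationship quoted just after the statement and then read off the two set identities almost immediately. First I would record the inverse branches explicitly. Since $\delta_L,\delta_R>0$, both $A_L$ and $A_R$ are invertible, and a short computation gives $g_L^{-1}(x)=\bigl(-x_2/\delta_L,\,x_1+(\tau_L/\delta_L)x_2-1\bigr)$ and similarly $g_R^{-1}(x)=\bigl(-x_2/\delta_R,\,x_1+(\tau_R/\delta_R)x_2-1\bigr)$. To see which branch computes $g^{-1}(x)$, I would write $y=g^{-1}(x)$ and split on the sign of $y_1$: if $y\in\Pi_L$ then $x=g_L(y)$ and $x_2=-\delta_L y_1$, while if $y\in\Pi_R$ then $x=g_R(y)$ and $x_2=-\delta_R y_1$. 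In particular $g_L$ carries the open left half-plane onto the open upper half-plane, $g_R$ carries the open right half-plane onto the open lower half-plane, and $\Sigma$ maps onto $\{x_2=0\}$; these three images are disjoint and cover $\mathbb{R}^2$, which simultaneously confirms that $g$ is a homeomorphism and tells us which branch applies.

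The key step is the sign equivalence. In both cases above $x_2=-\delta y_1$ for a positive determinant $\delta\in\{\delta_L,\delta_R\}$, so $x_2$ and $y_1=g^{-1}(x)_1$ always have opposite signs; more precisely $x_2>0\iff g^{-1}(x)_1<0$, $x_2=0\iff g^{-1}(x)_1=0$, and $x_2<0\iff g^{-1}(x)_1>0$. Because the three sign cases for $y_1$ are exhaustive and mutually exclusive and map onto the three exhaustive, mutually exclusive cases for $x_2$, each equivalence holds in both directions rather than as a one-sided implication.

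Given this, the two identities follow by unwinding the definitions. For the first, $\mathbb{R}^2\setminus\Pi_R=\{x_1<0\}$, and for such $x$ the condition $g^{-1}(x)\in\Pi_R$ reads $g^{-1}(x)_1\ge 0$, which by the equivalence is exactly $x_2\le 0$; hence the set equals $\{x_1<0,\,x_2\le 0\}=\Phi_L$. The identity for $\Phi_R$ is obtained in the same way with the roles of left and right interchanged, using $\mathbb{R}^2\setminus\Pi_L=\{x_1>0\}$ and $g^{-1}(x)_1\le 0\iff x_2\ge 0$.

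I do not anticipate a genuine obstacle here. The only point that requires care is the boundary behaviour on $\Sigma$, namely getting the non-strict inequalities right so that the half-open descriptions of $\Phi_L$ and $\Phi_R$ match up, and ensuring the sign correspondence is stated bidirectionally. Everything else is the routine linear algebra of inverting the two companion-form blocks.
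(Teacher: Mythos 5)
Your argument is correct and is exactly the paper's: the paper dispenses with the proof in one sentence ("the result follows immediately from the observation that $g^{-1}(x)_1$ and $x_2$ have opposite signs"), and your proposal simply makes that observation precise via the formula $x_2=-\delta\,g^{-1}(x)_1$ with $\delta\in\{\delta_L,\delta_R\}$ positive, together with the correct branch bookkeeping and boundary cases. Nothing is missing.
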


Now given a set $\Omega \subset \mathbb{R}^2$, suppose there exist
numbers $1 \le p_{\rm min} \le p_{\rm max}$ and $1 \le q_{\rm min} \le q_{\rm max}$ such that
\begin{align}
\chi_L(x) &\ge p_{\rm min} \,, \qquad \text{for all}~x \in \Omega \cap \Phi_L \,, \label{eq:pMinCond} \\
\chi_L(x) &\le p_{\rm max} \,, \qquad \text{for all}~x \in \Omega \cap \Pi_L \,, \label{eq:pMaxCond} \\
\chi_R(x) &\ge q_{\rm min} \,, \qquad \text{for all}~x \in \Omega \cap \Phi_R \,, \label{eq:qMinCond} \\
\chi_R(x) &\le q_{\rm max} \,, \qquad \text{for all}~x \in \Omega \cap \Pi_R \,. \label{eq:qMaxCond}
\end{align}
That is, any point in $\Omega \cap \Pi_L$ requires at most $p_{\rm max}$ iterations to cross $\Sigma$,
and at least $p_{\rm min}$ iterations if its preimage lies in $\Pi_R$.
The numbers $q_{\rm min}$ and $q_{\rm max}$ similarly bound the number of iterations required to cross $\Sigma$ from right to left.

For the given set $\Omega$, let
\begin{equation}
\bM_\Omega = \left\{ A_R^q A_L^p \,\big|\, p_{\rm min} \le p \le p_{\rm max},\, q_{\rm min} \le q \le q_{\rm max} \right\}.
\label{eq:bM}
\end{equation}
In Theorem \ref{th:main}, $\Omega$ is be assumed to be a {\em trapping region} for $g$,
that is, $g(\Omega) \subset {\rm int}(\Omega)$, where ${\rm int}(\cdot)$ denotes interior.
Also, to ensure a positive Lyapunov exponent, we assume $\bM_\Omega$ has a contracting-invariant, expanding cone.
This is defined as follows and illustrated in Fig.~\ref{fig:zm_cone_a}.

\begin{figure}[t!]
\begin{center}
\includegraphics[height=4.5cm]{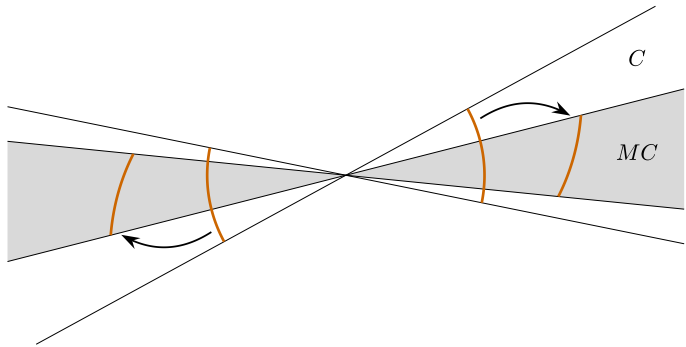}
\caption{
A sketch of a cone $C$ and its action under a matrix $M$
when $C$ is contracting-invariant and expanding for a collection $\bM$ that contains $M$.
The shaded region is the set $M C = \left\{ M v \,\middle|\, v \in C \right\}$.
The coloured curves indicate how unit vectors in $C$ map under $M$.
\label{fig:zm_cone_a}
}
\end{center}
\end{figure}

\begin{definition}
A {\em cone} is a non-empty set $C \subseteq \mathbb{R}^2$ for which $t v \in C$ for all $t \in \mathbb{R}$ and $v \in C$.
A cone $C$ is {\em contracting-invariant} for a collection of $2 \times 2$ matrices $\bM$
if $M v \in {\rm int}(C) \cup \{ \bO \}$ for all $v \in C$ and $M \in \bM$.
A cone $C$ is {\em expanding} for $\bM$ if there exists $c > 1$ such that
$\| M v \| \ge c \| v \|$ for all $v \in C$ and $M \in \bM$.
\label{df:ciec}
\end{definition}

Finally we state the main result.
We write $B_r(x)$ for the ball of radius $r > 0$ centred at $x \in \mathbb{R}^2$.

\begin{theorem}
Let $f$ be a piecewise-$C^1$ map of the form \eqref{eq:f} satisfying \eqref{eq:bcbCond}.
Let $g$ be the corresponding map \eqref{eq:bcnf} formed by using the values \eqref{eq:tLdLtRdR}.
Suppose
\begin{enumerate}
\setlength{\itemsep}{0pt}
\item
conditions \eqref{eq:transCond1}--\eqref{eq:transCond2}
and \eqref{eq:paramCond1}--\eqref{eq:paramCond3} are satisfied,
\item
$g$ has a trapping region $\Omega \subset \mathbb{R}^2$
and \eqref{eq:pMinCond}--\eqref{eq:qMaxCond} are satisfied for some
$1 \le p_{\rm min} \le p_{\rm max}$ and $1 \le q_{\rm min} \le q_{\rm max}$, and
\item
there exists a contracting-invariant, expanding cone for $\bM_\Omega$.
\end{enumerate}
Then there exists $\delta > 0$ and $s > 0$ such that for all $\mu \in (0,\delta)$
the map $f$ has a topological attractor $\Lambda_\mu \in B_{\mu s}(\bO)$
with the property that for Lebesgue almost all $y \in \Lambda_\mu$ there exists $v \in \mathbb{R}^2$ such that
\begin{equation}
\liminf_{n \to \infty} \frac{1}{n} \ln \left( \left\| \rD f^n(y) v \right\| \right) > 0.
\label{eq:liminf}
\end{equation}
\label{th:main}
\end{theorem}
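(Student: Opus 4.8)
The plan is to transfer the dynamical structure of the BCNF $g$ to the nonlinear map $f$ by treating $f$, after the $\mu$-dependent change of coordinates \eqref{eq:ytox}, as a small perturbation of $g$. Write $\phi_\mu$ for the affine map \eqref{eq:ytox} and set $F_\mu = \phi_\mu \circ f(\cdot\,;\mu) \circ \phi_\mu^{-1}$. Since $\phi_\mu$ sends the switching line $y_1 = 0$ to $\Sigma$ and conjugates the linear approximation $h(\cdot\,;\mu)$ to $g$, the higher-order terms $\co(\|y\|+|\mu|)$ in \eqref{eq:fLfR}, which are of size $o(\mu)$ on the ball $B_{\mu s}(\bO)$, get scaled by the factor $\tfrac{1}{\gamma\mu}$ and hence become $o(1)$; using that $f$ is piecewise-$C^1$ and that $\rD f_L, \rD f_R$ are continuous at $(\bO;0)$, one obtains that $F_\mu \to g$ and $\rD F_\mu \to \rD g$ uniformly on every compact subset of $\Pi_L$ and of $\Pi_R$ as $\mu \to 0^+$. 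Because $\phi_\mu$ is a fixed invertible affine map for each $\mu$, both the existence of a topological attractor and the positivity of the growth rate \eqref{eq:liminf} are preserved under passing between $f$ and $F_\mu$, so it suffices to prove the analogous statements for $F_\mu$ with the attractor confined to a bounded region independent of $\mu$.

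First I would establish persistence of the attractor and of the cone. Taking $\Omega$ bounded, so that $\overline{\Omega}$ is compact, the inclusion $g(\overline{\Omega}) \subset \mathrm{int}(\Omega)$ is an open condition, so uniform convergence gives $F_\mu(\overline{\Omega}) \subset \mathrm{int}(\Omega)$ for all sufficiently small $\mu$; hence $\hat\Lambda_\mu = \bigcap_{n\ge 0} F_\mu^n(\overline{\Omega})$ is a nonempty compact topological attractor, and $\Lambda_\mu = \phi_\mu^{-1}(\hat\Lambda_\mu) \subset B_{\mu s}(\bO)$ for a suitable $s$ by the scaling of $\phi_\mu^{-1}$. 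For the cone I would use that $\bM_\Omega$ in \eqref{eq:bM} is a finite set and that the defining conditions of Definition \ref{df:ciec}, namely $M v \in \mathrm{int}(C) \cup \{\bO\}$ and $\|M v\| \ge c \|v\|$ with $c > 1$, are open in $M$; thus there exist $\eta > 0$ and $c' > 1$ such that $C$ is contracting-invariant and expanding with factor $c'$ for every matrix lying within $\eta$ of some element of $\bM_\Omega$.

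The crux is to show that, for small $\mu$ and Lebesgue-almost every $y$, the derivative $\rD F_\mu^n$ along the forward orbit is a product of matrices each within $\eta$ of $\bM_\Omega$. This is exactly the danger flagged in the introduction: the higher-order terms could create orbits whose crossing times fall outside $[p_{\rm min},p_{\rm max}]$ or $[q_{\rm min},q_{\rm max}]$, or whose derivative products are not governed by $\bM_\Omega$. I would control this through the geometric decomposition of phase space by crossing number developed in \sect\ref{sec:leftright}. The bounds \eqref{eq:pMinCond}--\eqref{eq:qMaxCond} together with Lemma \ref{le:crossingSigma} say that in $\Omega$ every crossing from $\Pi_R$ into $\Pi_L$ lands in $\Phi_L$ and is followed by between $p_{\rm min}$ and $p_{\rm max}$ steps in $\Pi_L$, and symmetrically for $\Phi_R$ and $q_{\rm min},q_{\rm max}$; the regions on which $\chi_L$ and $\chi_R$ take each admissible constant value are separated by preimages of $\Sigma$. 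Off this countable union of lower-dimensional preimage curves, a Lebesgue-null set, a $g$-orbit crosses $\Sigma$ cleanly, staying a definite distance from $\Sigma$ during each of the at most $p_{\rm max}+q_{\rm max}$ steps of a segment; uniform $C^1$ closeness of $F_\mu$ to $g$ then forces the $F_\mu$-orbit to shadow it over these finitely many steps, realise the same admissible crossing counts, and produce a derivative product within $\eta$ of the matrix $A_R^q A_L^p \in \bM_\Omega$. The principal technical work is to make this uniform over the attractor at a fixed $\mu$: I would show that the $F_\mu$-preimages of $\Sigma$ converge to those of $g$, so that the exceptional set on which crossing counts are ambiguous or anomalous remains Lebesgue-null within $\hat\Lambda_\mu$ for each small $\mu$, and then discard it.

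With these ingredients the conclusion follows. For $y$ outside the exceptional set, choose $v \in C$ (pulled back to $y$-coordinates by $\phi_\mu^{-1}$); contracting-invariance keeps the image vectors in $C$ and expansion gives $\|\rD F_\mu^n v\| \ge (c')^{m_n} \|v\|$, where $m_n$ is the number of completed $\Pi_L$-then-$\Pi_R$ segments before time $n$. Since each segment has length at most $p_{\rm max}+q_{\rm max}$, we have $m_n \ge n/(p_{\rm max}+q_{\rm max}) - O(1)$, so $\liminf_{n\to\infty} \tfrac{1}{n}\ln \|\rD F_\mu^n v\| \ge \tfrac{\ln c'}{p_{\rm max}+q_{\rm max}} > 0$. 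Transferring back through the fixed invertible linear part of $\phi_\mu$ only introduces a constant factor, which is annihilated in the $\tfrac{1}{n}$ limit, yielding \eqref{eq:liminf} for $f$. I expect the crossing-time control to be by far the main obstacle, since it is precisely there that the non-smoothness of the map at $\Sigma$ interacts with the neglected higher-order terms.
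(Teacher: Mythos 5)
Your proposal follows essentially the same route as the paper's proof: the blow-up and affine conjugacy reducing $f$ to a small $C^1$ perturbation $\tilde g$ of $g$, persistence of the trapping region, openness of the contracting-invariant expanding cone conditions over the finite set $\bM_\Omega$, control of the crossing counts via the $D_p$/$E_q$ partition of \sect\ref{sec:leftright}, an induced first-return map, and the resulting bound $\ln(c')/(p_{\rm max}+q_{\rm max})$ on the Lyapunov exponent. The one difference is in execution at the step you rightly flag as the principal obstacle: where you propose an almost-everywhere shadowing argument and discard a null exceptional set, the paper instead derives \emph{uniform} crossing-count bounds valid for all points of $\tilde g(\Omega;\mu) \cap \Phi_{L,\mu}$, using the compact containment $g(\Omega) \subset {\rm int}(\Omega)$ to create a definite margin $\ee_2$ and Lemmas \ref{le:DiBoundary} and \ref{le:EiBoundary} (transversality of the region boundaries with the $x_1$-axis) to ensure the perturbed set $\Phi_{L,\mu}$, which bulges slightly above that axis, picks up no new regions $D_p$.
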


Notice $\Lambda_\mu$ is chaotic in the sense of a positive Lyapunov exponent,
as implied by \eqref{eq:liminf}.

\section{A partition of the plane}
\label{sec:leftright}
\setcounter{equation}{0}

In this section we describe the geometry of the regions of the left half-plane with different values of $\chi_L$
and regions of the right half-plane with different values of $\chi_R$ for the BCNF \eqref{eq:bcnf}.
The ways in which their images intersect can be used to establish conditions
under which chaotic attractors exist \cite{GlSi21b}.
The regions corresponding to values of $\chi_L$ were described in \cite{Si20e},
so for these we simply state results without proof.
The regions corresponding to values of $\chi_R$
can be described in a similar way;
details of these calculations are provided in Appendix \ref{app:Ei}.

\begin{definition}
For each $p,q \ge 1$, let
\begin{align}
D_p &= \left\{ x \in \Pi_L \,\big|\, \chi_L(x) = p \right\}, \label{eq:Di} \\
E_q &= \left\{ x \in \Pi_R \,\big|\, \chi_R(x) = q \right\}. \label{eq:Ei}
\end{align}
\label{df:DiEi}
\end{definition}

\begin{figure}[b!]
\begin{center}
\includegraphics[height=7.5cm]{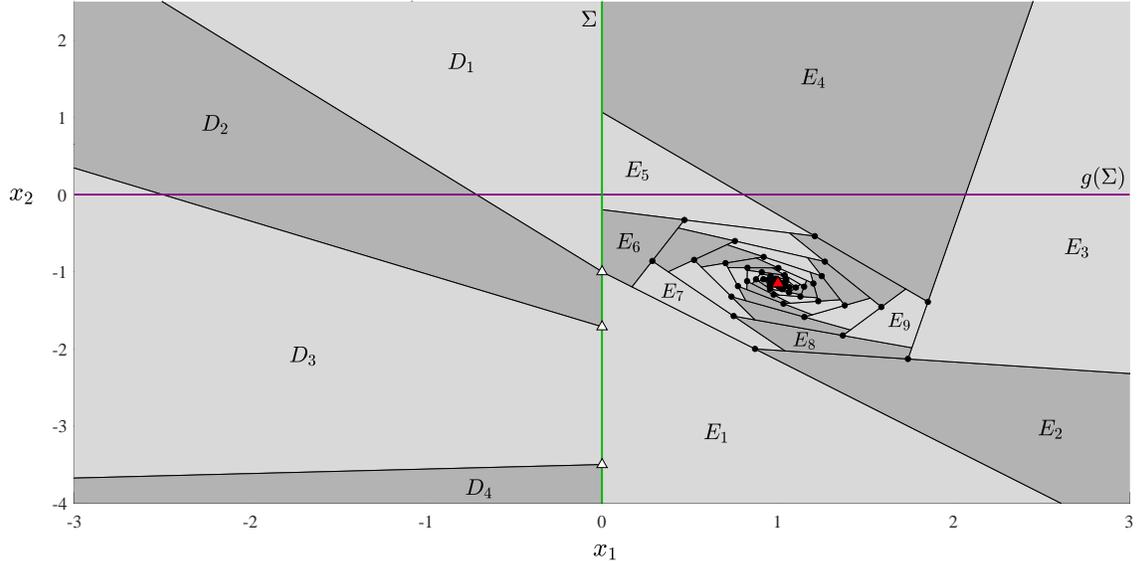}
\caption{
The regions $D_p$ \eqref{eq:Di} and $E_q$ \eqref{eq:Ei}
for a typical instance of the two-dimensional BCNF \eqref{eq:bcnf},
specifically, $(\tau_L,\delta_L,\tau_R,\delta_R) = (1.4,1,1.15,1.15)$.
In $D_p$ iterates require $p$ iterations to escape the closed left half-plane;
in $E_q$ iterates require $q$ iterations to escape the closed right half-plane
(see Definition \ref{df:chiLR}).
Here $p^* = 3$ (see Definition \ref{df:pStar}),
so $D_1,\ldots,D_4$ cover the third quadrant $\Phi_L$ (see Lemma \ref{le:DiPhiL}).
Also $q^* = 3$ and $q^{**} = 5$ (see Definition \ref{df:qStar})
so $E_3,\ldots,E_5$ cover the first quadrant $\Phi_R$ (see Lemma \ref{le:EiPhiR}).
The backwards orbit of the origin is shown with black dots.
The fixed point $x^R$ (a repelling focus) is shown with a red triangle.
\label{fig:zm_DpEq}
}
\end{center}
\end{figure}

First consider the sets $D_p$.
As shown by Proposition 6.6 of \cite{Si20e},
each $D_p$ is bounded by the lines $g_L^{-p}(\Sigma)$, $g_L^{-(p-1)}(\Sigma)$, and $\Sigma$, Fig.~\ref{fig:zm_DpEq}.
If the line $g_L^{-p}(\Sigma)$ is not vertical,
let $m_p$ denote its slope and $c_p$ denote its $x_2$-intercept, i.e.,
\begin{equation}
g_L^{-p}(\Sigma) = \left\{ x \in \mathbb{R}^2 \,\big|\, x_2 = m_p x_1 + c_p \right\}.
\label{eq:gLmiSigma}
\end{equation}

\begin{definition}
Let $p^*$ be the smallest $p \ge 1$ for which $m_p \ge 0$,
with $p^* = \infty$ if $m_p < 0$ for all $p \ge 1$.
\label{df:pStar}
\end{definition}

The next result is a simple consequence of results obtained in \cite{Si20e}.
Recall $\Phi_L$ is the third quadrant \eqref{eq:PhiL}.

\begin{lemma}
Suppose $\delta_L > 0$.
If $p^* < \infty$ then the sets $D_p \cap \Phi_L$, for $p = 1,\ldots,p^*+1$, are non-empty and cover $\Phi_L$.
If $p^* = \infty$ then $D_p \cap \Phi_L \ne \varnothing$ for all $p \ge 1$.
\label{le:DiPhiL}
\end{lemma}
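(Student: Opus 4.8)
The plan is to recast everything in terms of the half-planes $H_j = g_L^{-j}(\Pi_L) = \{x : g_L^j(x) \in \Pi_L\}$, whose boundary is the line $\ell_j := g_L^{-j}(\Sigma)$ (so $H_0 = \Pi_L$ and $\ell_0 = \Sigma$). A point of $\Pi_L$ escapes within $N$ steps precisely when it fails to lie in all of $H_1,\ldots,H_N$, so the definition of $\chi_L$ yields the identity $\bigcup_{p=1}^N D_p = \Pi_L \setminus \bigcap_{j=1}^N H_j$. Since $\Phi_L \subseteq \Pi_L$, the covering assertion (the case $p^* < \infty$) reduces to showing $\Phi_L \cap \bigcap_{j=1}^{p^*+1} H_j = \varnothing$, whereas non-emptiness of each $D_p \cap \Phi_L$ is a separate and softer matter. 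Throughout I would take Proposition 6.6 of \cite{Si20e} as given, so that each $D_p$ is the strip of $\Pi_L$ caught between $\ell_{p-1}$ and $\ell_p$, and I would use the slopes $m_p$ and $x_2$-intercepts $c_p$ already recorded there.

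First I would pin down the lines geometrically. From $g_L^{-1}(\bO) = (0,-1)$ one computes $\Sigma \cap \ell_1 = \{(0,-1)\}$, and applying $g_L^{-j}$ gives $\ell_{j-1} \cap \ell_j = g_L^{-j}(\bO)$; thus $\ell_j$ is the line through the two consecutive backward images $g_L^{-j}(\bO)$ and $g_L^{-(j+1)}(\bO)$ of the origin. Coupled with the slope recursion $m_{j+1} = -\tau_L - \delta_L/m_j$ (a short computation, or equivalently the results of \cite{Si20e}), this lets me show that for $1 \le j \le p^*-1$ the slope $m_j$ is negative while the intercepts $c_j$ are negative and strictly decreasing. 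Hence $\ell_1,\ldots,\ell_{p^*-1}$ are chords of $\Phi_L$ running from $(0,c_j)$ on $\Sigma$ to a point on the negative $x_1$-axis, sweeping the third quadrant monotonically outward from the corner as $j$ increases.

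Non-emptiness then follows readily: for $1 \le p \le p^*$ (and for all $p$ when $p^* = \infty$), the ordering $c_p < c_{p-1} < 0$ means a point just inside $\Pi_L$ at height $x_2 \in (c_p, c_{p-1})$ lies in $D_p \cap \Phi_L$, while for $p = p^*+1$ the unbounded part of $\Phi_L$ lying beyond $\ell_{p^*}$ is non-empty. When $p^* = \infty$ only this non-emptiness is claimed, and the argument stops here; indeed the $D_p$ need not cover $\Phi_L$ in that case, since orbits may converge to the fixed point of $g_L$ without ever leaving $\Pi_L$.

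The real work is the covering when $p^* < \infty$, i.e. the bound $\chi_L \le p^*+1$ on $\Phi_L$, for which I would prove the sharp statement $H_{p^*+1} \cap \Phi_L = \varnothing$. The defining property $m_{p^*} \ge 0$ forces $\ell_{p^*+1} = g_L^{-1}(\ell_{p^*})$ to have rotated so far (its slope becoming large in magnitude, the line nearly vertical on the far side) that the half-plane $H_{p^*+1}$ no longer meets the closed third quadrant. I expect this to be the main obstacle, because the slope alone does not determine on which side of $\ell_{p^*+1}$ the set $H_{p^*+1}$ lies: one must track the intercept and orientation of the line across the single step where the slope changes sign, using the anchoring points $g_L^{-(p^*+1)}(\bO)$ and $g_L^{-(p^*+2)}(\bO)$ to locate $\ell_{p^*+1}$ precisely. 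Once $H_{p^*+1} \cap \Phi_L = \varnothing$ is established, the telescoping identity gives $\Phi_L \subseteq \bigcup_{p=1}^{p^*+1} D_p$, completing the covering.
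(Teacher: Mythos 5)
The paper never proves this lemma --- it is imported from Proposition~6.6 and Lemma~6.2 of \cite{Si20e} (``we simply state results without proof''), with the analogous argument for the $E_q$ carried out in Appendix~\ref{app:Ei} --- so your attempt can only be measured against that style of argument. Your framework is sound and consistent with it: the half-planes $H_j = g_L^{-j}(\Pi_L)$, the identity $\bigcup_{p=1}^N D_p = \Pi_L \setminus \bigcap_{j=1}^N H_j$, and the fact that $\ell_j = g_L^{-j}(\Sigma)$ passes through the consecutive backward images $g_L^{-j}(\bO)$ and $g_L^{-(j+1)}(\bO)$ are all correct. The problem is the step you yourself flag as the main obstacle: the ``sharp statement'' $H_{p^*+1} \cap \Phi_L = \varnothing$ is \emph{false} in general, so the route you propose for the covering cannot be completed. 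Take $(\tau_L,\delta_L) = (1,4)$. Then $m_1 = -1$ and $m_2 = 3$, so $p^* = 2$; but $g_L^3(x)_1 = -7x_1 - 3x_2 - 1$, so $\ell_3$ is the line $x_2 = -\tfrac73 x_1 - \tfrac13$, which has negative slope and negative intercept and hence cuts through the open third quadrant (it contains $(-0.1,-0.1)$). Both half-planes bounded by $\ell_3$, in particular $H_3$, therefore meet $\Phi_L$. The covering still holds, but only because $H_2 \cap H_3 = \{x_2 \le 3x_1 - 2\} \cap \{x_2 \ge -\tfrac73 x_1 - \tfrac13\} \subseteq \{x_1 \ge \tfrac{5}{16}\}$ misses the left half-plane entirely: the conclusion $\Phi_L \cap \bigcap_{j=1}^{p^*+1} H_j = \varnothing$ genuinely requires the interaction of several of the $H_j$ (equivalently, the wedge structure of the $D_p$ anchored at the points $g_L^{-p}(\bO)$, which is how the appendix handles the $E_q$), not the position of $\ell_{p^*+1}$ alone. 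Note also that the orientation of $H_j$ relative to $\ell_j$ flips along the way (here $H_1,H_2$ lie below their boundary lines while $H_3$ lies above), which is exactly the bookkeeping your sketch defers.

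A secondary gap of the same kind: the claim that the intercepts $c_j$ are strictly decreasing for $j < p^*$ does not follow from the recursion $c_{j+1} = -c_j/m_j - 1$ together with $c_j, m_j < 0$ alone; that hypothesis is compatible with $c_{j+1} > c_j$ (e.g.\ $(m_j,c_j) = (-3,-2)$ gives $c_{j+1} = -\tfrac53$). Since $\ell_{j-1}$ and $\ell_j$ meet at $g_L^{-j}(\bO)$, one has $c_j - c_{j-1} = (m_{j-1}-m_j)\,g_L^{-j}(\bO)_1$, so the monotonicity you need is tied to where the backward orbit of the origin sits and which way the slopes rotate --- precisely the inductive control that \cite{Si20e} establishes and that your sketch assumes. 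In short, the skeleton is reasonable, but the two assertions carrying the real content of the lemma are, respectively, false as stated and unproved.
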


For example with $(\tau_L,\delta_L) = (1.4,1)$, as in Fig.~\ref{fig:zm_DpEq}, we have $p^* = 3$.
Fig.~\ref{fig:zm_tauDeltaL} shows how the value of $p^*$,
and hence the values of $p$ for which $D_p \cap \Phi_L \ne \varnothing$,
depends on the values of $\tau_L$ and $\delta_L$.
Between curves where $m_{i-1} = 0$ and $m_i = 0$, we have
$D_p \cap \Phi_L \ne \varnothing$ if and only if $p \in \{ 1,2,\ldots i+1 \}$.
As $p \to \infty$ these curves accumulate on $\delta_L = \frac{\tau_L^2}{4}$ where
$A_L$ has repeated eigenvalues.
With $\delta_L > \frac{\tau_L^2}{4}$, $p^* = \infty$ and every $D_p \cap \Phi_L$ is non-empty.

\begin{figure}[b!]
\begin{center}
\includegraphics[height=5cm]{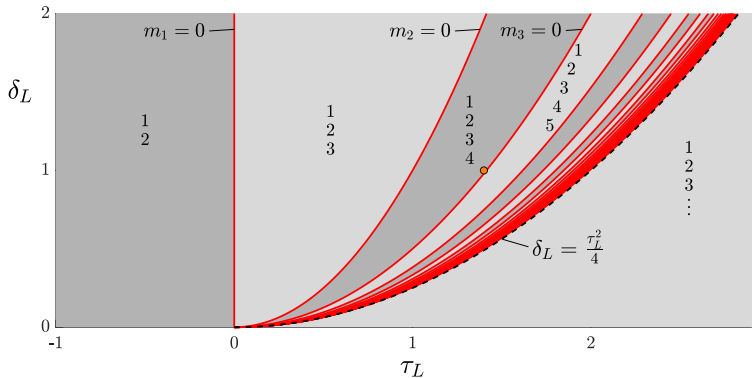}
\caption{
A division of the $(\tau_L,\delta_L)$-plane according to
values of $p$ for which $D_p \cap \Phi_L \ne \varnothing$.
The regions are labelled by these values of $p$
and are bounded by curves where $m_i = 0$ for some $i$ (see Definition \ref{df:pStar} and Lemma \ref{le:DiPhiL}).
For example, to the left of line $\tau_L = 0$
we have $D_p \cap \Phi_L \ne \varnothing$ if and only if $p \in \{ 1, 2 \}$,
so this region is labelled by the numbers $1$ and $2$.
The orange circle (located just to the left of the curve where $m_3 = 0$)
indicates the values of $\tau_L$ and $\delta_L$ used in Fig.~\ref{fig:zm_DpEq}.
\label{fig:zm_tauDeltaL}
}
\end{center}
\end{figure}

In regards to Theorem \ref{th:main},
while the values of $p_{\rm min}$ and $p_{\rm max}$ depend on the particular trapping region $\Omega$ being considered,
Lemma \ref{le:DiPhiL} tells us that the value of $p_{\rm min}$ could always be as low as $1$,
while the value of $p_{\rm max}$ cannot be more than $p^* + 1$.

Next we note that the boundaries of the $D_p$ intersect the $x_1$-axis transversally.
This is the case because each $g_L^{-p}(\Sigma)$ is a line intersecting the $x_2$-axis at $(0,c_p)$
with $c_p < 0$, see Lemma 6.2 of \cite{Si20e}.
These transversal intersections are used to argue persistence in \sect\ref{sec:proof}.

\begin{lemma}
Suppose $\delta_L > 0$ and $x = (x_1,0)$, with $x_1 < 0$, is a point on the boundary of some $D_p$.
Then, local to $x$, the boundary of $D_p$ is a line segment that intersects $g(\Sigma)$ transversally.
\label{le:DiBoundary}
\end{lemma}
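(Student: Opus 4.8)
The plan is to reduce the statement to a fact about a single one of the preimage lines bounding $D_p$ together with the fixed line $g(\Sigma)$. First I would pin down $g(\Sigma)$ explicitly: on $\Sigma$ we have $x_1 = 0$, so both branches of $g$ give $g(0,x_2) = (x_2+1,0)$, whence $g(\Sigma)$ is precisely the $x_1$-axis $\{x_2 = 0\}$. Thus the hypothesis $x = (x_1,0)$ merely records that $x$ is the point at which the boundary of $D_p$ meets $g(\Sigma)$, and the claim reduces to showing that this meeting is transversal and that the boundary is locally straight there.

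Next I would identify which bounding line carries $x$. By Proposition~6.6 of \cite{Si20e} the boundary of $D_p$ is composed of segments of the three lines $\Sigma$, $g_L^{-(p-1)}(\Sigma)$ and $g_L^{-p}(\Sigma)$. Since $x_1 < 0$ the point $x$ does not lie on $\Sigma$, so it lies on $g_L^{-k}(\Sigma)$ for some $k \in \{p-1,p\}$ with $k \ge 1$. By Lemma~6.2 of \cite{Si20e} this line meets the $x_2$-axis at $(0,c_k)$ with $c_k < 0$; as it also passes through $x = (x_1,0)$ with $x_1 < 0$, it is non-vertical with slope $m_k = -c_k/x_1 < 0$, in particular nonzero. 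A line of finite nonzero slope crosses the horizontal line $g(\Sigma)$ transversally, and since the boundary of $D_p$ coincides with $g_L^{-k}(\Sigma)$ in a neighbourhood of $x$, this yields both conclusions at once: locally the boundary is the line segment $g_L^{-k}(\Sigma)$, and it cuts $g(\Sigma)$ transversally.

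The one point needing care---and the step I expect to be the only genuine obstacle---is justifying that the boundary is locally a single segment rather than a corner, i.e.\ that $x$ is interior to an edge. The three vertices of $D_p$ are the two intersections with $\Sigma$, both of which have $x_1 = 0$ and are excluded by $x_1 < 0$, together with the apex $g_L^{-(p-1)}(\Sigma) \cap g_L^{-p}(\Sigma)$. I would locate this apex by observing that $g_L(0,-1) = \bO$, so $\Sigma \cap g_L^{-1}(\Sigma) = \{(0,-1)\}$ and hence the apex equals $g_L^{-(p-1)}(0,-1) = g_L^{-p}(\bO)$. For $x$ to be this apex would force $\left( g_L^{-p}(\bO) \right)_2 = 0$, a codimension-one coincidence in parameter space that I would exclude; moreover, even in that degenerate case both incident edges pass through the points $(0,c_{p-1})$ and $(0,c_p)$ strictly below the origin, so each still has negative slope and meets $g(\Sigma)$ transversally. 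Hence the transversality conclusion---which is all that \sect\ref{sec:proof} actually uses---survives regardless, while the clean single-segment description holds for all but an exceptional parameter set. Once $c_p < 0$ is in hand the transversality itself is immediate, and no further estimates are required.
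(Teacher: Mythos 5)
Your core argument is the paper's own (the paper offers only the one-line justification preceding the lemma, deferring to Lemma~6.2 and Proposition~6.6 of \cite{Si20e}): $g(\Sigma)$ is the $x_1$-axis, the bounding line through $x$ also passes through $(0,c_k)$ with $c_k<0$, hence has negative slope and meets $g(\Sigma)$ transversally. The one place your write-up falls short of the statement is the corner case, which you dispose of by excluding a codimension-one parameter set; the lemma carries no genericity hypothesis, so this leaves the ``line segment'' clause unproven on the excluded set. In fact no exclusion is needed, because the apex $g_L^{-(p-1)}(\Sigma)\cap g_L^{-p}(\Sigma)=g_L^{-p}(\bO)$ can never be a boundary point of $D_p$ with negative first coordinate. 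For $p=1$ it is $(0,-1)\in\Sigma$. For $p\ge 2$, compute $g_L^{-2}(\bO)=\left(\tfrac{1}{\delta_L},\,-1-\tfrac{\tau_L}{\delta_L}\right)$, whose first coordinate is $\tfrac{1}{\delta_L}>0$: for $p=2$ the apex therefore lies outside $\Pi_L\supseteq{\rm cl}(D_2)$, and for $p\ge 3$ we have $g_L^{p-2}\big(g_L^{-p}(\bO)\big)=g_L^{-2}(\bO)\notin\Pi_L$ with $1\le p-2\le p-1$, whereas ${\rm cl}(D_p)\subseteq\bigcap_{j=0}^{p-1}g_L^{-j}(\Pi_L)$, so again the apex is not in ${\rm cl}(D_p)$. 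Since $D_p$ is an intersection of half-planes (each condition $g_L^j(x)\in\Pi_L$ is affine), it is convex with vertices only at pairwise intersections of its three bounding lines, and the remaining two vertices $(0,c_{p-1})$, $(0,c_p)$ lie on $\Sigma$; hence your $x$ is interior to an edge and the single-segment conclusion holds unconditionally. Your fallback observation that transversality survives in any case is correct and is all that Step~2 of the proof of Theorem~\ref{th:main} actually uses.
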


We now turn our attention to the sets $E_q$.
To characterise these we assume $\tau_R$ and $\delta_R$ satisfy \eqref{eq:paramCond2} and \eqref{eq:paramCond3}.
Condition \eqref{eq:paramCond2} implies $g_R$ has the unique fixed point
\begin{equation}
x^R = \frac{1}{\delta_R - \tau_R + 1} \begin{bmatrix} 1 \\ -\delta_R \end{bmatrix},
\label{eq:xR}
\end{equation}
which lies in the fourth quadrant.
Condition \eqref{eq:paramCond3} ensures that the set of $q$ for which
$E_q \cap \Phi_R \ne \varnothing$ admits a relatively simple characterisation
and that the boundaries of the $E_q$ intersect the $x_1$-axis transversally.

By an analogous argument to that for the sets $D_p$,
the lines $g_R^{-q}(\Sigma)$, $g_R^{-(q-1)}(\Sigma)$, and $\Sigma$
form the boundaries of the $E_q$.
However, their layout is more complicated than that of the $D_p$.
Fig.~\ref{fig:zm_DpEq} shows a typical example.

If the line $g_R^{-q}(\Sigma)$ is not vertical,
let $n_q$ denote its slope and $d_q$ denote its $x_2$-intercept, i.e.,
\begin{equation}
g_R^{-q}(\Sigma) = \left\{ x \in \mathbb{R}^2 \,\big|\, x_2 = n_q x_1 + d_q \right\}.
\label{eq:gRmiSigma}
\end{equation}
If $g_R^{-q}(\Sigma)$ is vertical we write $n_q = \infty$
(in this case $g_R^{-q}(\Sigma)$ is the line $x_1 = -\frac{d_{q-1}}{\delta_R}$, see Appendix \ref{app:Ei}).

\begin{definition}
Let $q^*$ be the smallest $q \ge 1$ for which $n_q > 0$ or $n_q = \infty$.
Let $q^{**}$ be the smallest $q > q^*$ for which $d_q \le 0$.
\label{df:qStar}
\end{definition}

\begin{lemma}
Suppose \eqref{eq:paramCond2} and \eqref{eq:paramCond3} are satisfied.
Then $q^*$ and $q^{**}$ exist and
the sets $E_q \cap \Phi_R$, for $q = q^*,\ldots,q^{**}$, are non-empty and cover $\Phi_R$.
\label{le:EiPhiR}
\end{lemma}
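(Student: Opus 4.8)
The plan is to exploit the fact that, by \eqref{eq:paramCond2}, $A_R$ has complex eigenvalues of modulus $\sqrt{\delta_R}$ and argument $\theta\in(0,\pi)$, so that $g_R$ is an affine spiral about the repelling focus $x^R$ of \eqref{eq:xR}. Write $\ell_q=g_R^{-q}(\Sigma)$ for the lines of \eqref{eq:gRmiSigma} and $P_q=g^{-q}(\bO)$; since $\bO\in\Sigma$ we have $P_q\in\ell_q$, and by \eqref{eq:paramCond3} the backward orbit $\{P_q\}$ lies in the open fourth quadrant and converges to $x^R$. The direction of $\ell_q$ is $A_R^{-q}$ applied to the vertical direction of $\Sigma$, and therefore rotates by $\theta$ at each step; it consequently enters the range of positive (or infinite) slope within finitely many steps, which yields $q^*$. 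For $q^{**}$, the intercept satisfies $d_q=(P_q)_2-n_q(P_q)_1$, and since $(P_q)_1\to(x^R)_1>0$ while $(P_q)_2\to(x^R)_2<0$, at every sufficiently large $q$ with $n_q\ge 0$ we have $d_q\le(P_q)_2<0$; as such $q$ recur indefinitely, the set $\{q>q^*\mid d_q\le 0\}$ is non-empty and $q^{**}$ exists. I would first record from Appendix \ref{app:Ei} the recursions for $n_q$ and $d_q$ that make the monotone rotation and these sign statements precise.

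Next I would show that $q^*\le\chi_R(x)\le q^{**}$ for every $x\in\Phi_R$ and that each intermediate value is attained. For the lower bound, the sets $E_1,\dots,E_{q^*-1}$ are bounded by $\Sigma=\ell_0$ and $\ell_1,\dots,\ell_{q^*-1}$, and by the sign information of Appendix \ref{app:Ei} (where \eqref{eq:paramCond3} is essential) these lines have negative slope and non-positive $x_2$-intercept for $1\le q\le q^*-1$; hence $E_1,\dots,E_{q^*-1}$ are contained in the lower half-plane $\{x_2\le 0\}$ and, by the transversality of their boundaries at the $x_1$-axis (the analogue for the $E_q$ of Lemma \ref{le:DiBoundary}), are disjoint from $\Phi_R$, giving $\chi_R\ge q^*$ there. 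For the realisation I would use the monotone rotation to prove that the lines $\ell_{q^*},\dots,\ell_{q^{**}-1}$ each meet $\Phi_R$ and are nested about the corner $\bO$ without crossing inside $\Phi_R$; these $q^{**}-q^*$ lines then cut $\Phi_R$ into the $q^{**}-q^*+1$ open wedges $E_{q^*}\cap\Phi_R,\dots,E_{q^{**}}\cap\Phi_R$, each non-empty.

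The upper bound $\chi_R\le q^{**}$ on $\Phi_R$ is the step I expect to be the main obstacle. Geometrically, as $q$ increases from $q^*$ the rotating line $\ell_q$ sweeps across $\Phi_R$, and the defining condition $d_{q^{**}}\le 0$ is exactly the first step after $q^*$ at which this line meets $\Sigma$ at or below $\bO$, which is when the corner $\bO$ first escapes $\Pi_R$; thus $\chi_R(\bO)=q^{**}$ and the wedge $E_{q^{**}}$ closes off $\Phi_R$ at the origin. The difficulty is that, because the slopes $n_q$ recur through positive values, the lines $\ell_q$ with $q>q^{**}$ re-enter $\Phi_R$, so a naive count of line-regions would manufacture spurious sets $E_q\cap\Phi_R$; the resolution is that any point of $\Phi_R$ lying on such a late line has already left $\Pi_R$ at an earlier iterate, so its true escape time does not exceed $q^{**}$. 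I would make this precise through the nested convex polygons $W_q=\bigcap_{j=0}^{q}g_R^{-j}(\Pi_R)$ of points whose first $q$ iterates remain in $\Pi_R$: these decrease to the single point $x^R\notin\overline{\Phi_R}$, so the content is to show that $W_{q^{**}}\cap\Phi_R=\varnothing$ whereas $W_{q^{**}-1}$ still meets $\Phi_R$ along the corner wedge. The transversality at the $x_1$-axis then guarantees that $W_q$ detaches from $\Phi_R$ cleanly at $q=q^{**}$, whence $\chi_R$ attains its maximum $q^{**}$ on $\Phi_R$ and Lemma \ref{le:EiPhiR} follows.
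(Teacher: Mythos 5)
Your setup (the recursions for $n_q$ and $d_q$, the backward orbit of $\bO$ trapped in the open fourth quadrant and converging to $x^R$, the monotone rotation of the lines $g_R^{-q}(\Sigma)$ forcing $q^*$ and $q^{**}$ to exist, and the emptiness of $E_q \cap \Phi_R$ for $q < q^*$) matches the paper's argument in Appendix~\ref{app:Ei} in all essentials, and the existence arguments for $q^*$ and $q^{**}$ are sound (the identity $d_q = (P_q)_2 - n_q (P_q)_1$ with $(P_q)_1 > 0$, $(P_q)_2 < 0$ is exactly the right observation; note only that the direction of $\ell_q$ rotates monotonically but not by a constant angle in the original coordinates, which does not affect the conclusion). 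The problem is the step you yourself flag as the main obstacle: the covering of $\Phi_R$, equivalently $\chi_R \le q^{**}$ on $\Phi_R$. There you do not give a proof — you reformulate the claim as $W_{q^{**}} \cap \Phi_R = \varnothing$ and then assert that ``transversality at the $x_1$-axis guarantees that $W_q$ detaches from $\Phi_R$ cleanly at $q = q^{**}$.'' Transversality of the boundaries of the $E_q$ with the $x_1$-axis (Lemma~\ref{le:EiBoundary}) is about the positive $x_1$-axis and is used elsewhere, in Step~2 of the proof of Theorem~\ref{th:main}; it says nothing about where $W_q$ loses contact with $\Phi_R$, which happens at the corner of $\Phi_R$ on the positive $x_2$-axis. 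To close your argument you would still have to show that $W_{q^{**}-1} \cap \Phi_R$ lies entirely on the side of $g_R^{-q^{**}}(\Sigma)$ that is excluded from $g_R^{-q^{**}}(\Pi_R)$, i.e.\ that the half-plane added at step $q^{**}$ actually removes the whole remaining corner; the condition $d_{q^{**}} \le 0$ alone does not do this without control of the orientation of that half-plane and of the shape of $W_{q^{**}-1} \cap \Phi_R$.

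The paper avoids this by describing each $E_q$ for $q^* \le q \le q^{**}$ explicitly via the recursion $E_q = g_R^{-1}(E_{q-1}) \cap \Pi_R$: $E_{q^*}$ is a wedge containing an unbounded piece of the positive $x_1$-axis; since $g_R^{-1}$ maps the $x_1$-axis to the $x_2$-axis, $E_{q^*+1}$ contains an unbounded piece of the positive $x_2$-axis and has a third boundary on $\Sigma$ above $(0,d_{q^*+1})$; and for $q^*+2 \le q \le q^{**}$ each $E_q$ is the triangle with vertices $g_R^{-q}(\bO)$, $(0,d_{q-1})$ and $(0,d_q)$, with $d_{q-1} > 0$ throughout and $d_{q^{**}} \le 0$ so that the last triangle closes off $\Phi_R$ at the origin. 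This explicit tiling gives the covering directly, and since the $E_q$ are pairwise disjoint by definition, the worry about lines $\ell_q$ with $q > q^{**}$ re-entering $\Phi_R$ and creating spurious regions evaporates: those lines can only subdivide sets $E_q$ that have already been accounted for. So the concern you raise is legitimate but is resolved by the explicit description rather than by a separate nested-polygon argument; as written, your proposal leaves the decisive covering step unproven.
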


\begin{figure}[b!]
\begin{center}
\includegraphics[height=7.5cm]{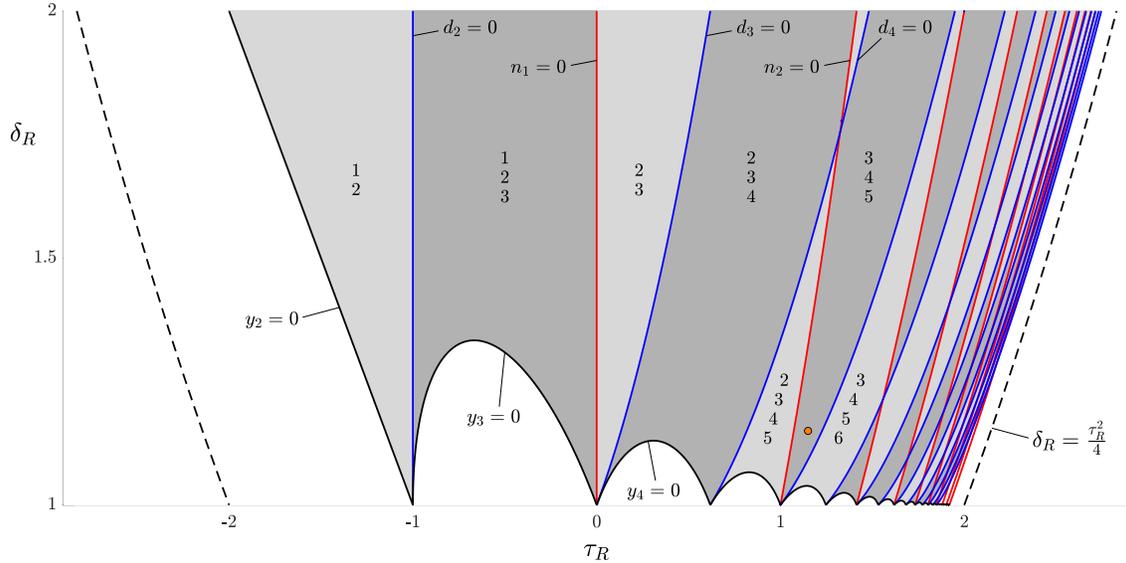}
\caption{
A division of the $(\tau_R,\delta_R)$-plane according to values of
$q$ for which $E_q \cap \Phi_R \ne \varnothing$.
These regions are labelled by these values of $q$
and bounded by curves where $n_i = 0$ (red) and $d_i = 0$ (blue) for some $i$
(see Definition \ref{df:qStar} and Lemma \ref{le:EiPhiR}).
The black curves bound the region where condition \eqref{eq:paramCond3} is satisfied
(each $y_i$ denotes $g_R^{-i}(\bO)_2$).
The orange circle indicates the values of $\tau_R$ and $\delta_R$ used in Fig.~\ref{fig:zm_DpEq}.
\label{fig:zm_tauDeltaR}
}
\end{center}
\end{figure}

Lemma \ref{le:EiPhiR} is proved in Appendix \ref{app:Ei}.
The values of $q^*$ and $q^{**}$ are determined by the
values of $\tau_R$ and $\delta_R$ as indicated in Fig.~\ref{fig:zm_tauDeltaR}.
As we move about the $(\tau_R,\delta_R)$-plane,
the value of $q^*$ changes by one when we cross a curve where $n_q = 0$,
and the value of $q^{**}$ changes by one when we cross a curve where $d_q = 0$.
These curves accumulate on $\delta_R = \frac{\tau_R^2}{4}$ past which
$A_R$ has real eigenvalues and condition \eqref{eq:paramCond2} is no longer satisfied.
In Fig.~\ref{fig:zm_tauDeltaR}, condition \eqref{eq:paramCond3} is satisfied above the piecewise-smooth black curve.

In regards to Theorem \ref{th:main},
Lemma \ref{le:EiPhiR} implies $q_{\rm min} \ge q^*$ and $q_{\rm max} \le q^{**}$.
Analogous to Lemma \ref{le:DiBoundary} we have the following result (proved in Appendix \ref{app:Ei}).

\begin{lemma}
Suppose \eqref{eq:paramCond2} and \eqref{eq:paramCond3} are satisfied
and $x = (x_1,0)$, with $x_1 > 0$, is a point on the boundary of some $E_q$.
Then, local to $x$, the boundary of $E_q$ is a line segment that intersects $g(\Sigma)$ transversally.
\label{le:EiBoundary}
\end{lemma}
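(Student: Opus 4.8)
The plan is to reduce the statement to a single elementary fact: $g$ maps the switching manifold onto the $x_1$-axis. Indeed, for $(0,t)\in\Sigma$ we have $g_R(0,t)=(t+1,0)$, so $g(\Sigma)=\left\{x\in\mathbb{R}^2\,\middle|\,x_2=0\right\}$. Hence a line segment meets $g(\Sigma)$ transversally exactly when it is not horizontal, and the whole lemma amounts to showing that, near $x=(x_1,0)$ with $x_1>0$, the boundary of $E_q$ is a single non-horizontal line segment. Since the boundary of $E_q$ is contained in the three lines $g_R^{-q}(\Sigma)$, $g_R^{-(q-1)}(\Sigma)$ and $\Sigma$, and $x_1>0$ rules out $\Sigma$, the point $x$ lies on $g_R^{-k}(\Sigma)$ for some $k\in\{q-1,q\}$ with $k\ge 1$ (the value $k=0$ would place $x$ on $\Sigma$).

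The key input will be condition \eqref{eq:paramCond3}. Because the backward orbit of $\bO$ is constrained to the fourth quadrant it is governed entirely by $g_R$, so $g^{-k}(\bO)=g_R^{-k}(\bO)$ and \eqref{eq:paramCond3} gives $g_R^{-k}(\bO)_2<0$ for every $k\ge 1$. As $\bO\in\Sigma$, the point $g_R^{-k}(\bO)$ lies on $g_R^{-k}(\Sigma)$, so each relevant preimage line passes through a point strictly below the $x_1$-axis. I would use this for transversality as follows: the line $g_R^{-k}(\Sigma)$ through $x$ also contains $g_R^{-k}(\bO)$, whose second coordinate is strictly negative, while $x$ has second coordinate $0$; a horizontal line cannot contain two points of different heights, so $g_R^{-k}(\Sigma)$ is not horizontal and therefore crosses $g(\Sigma)$ transversally.

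To rule out a corner at $x$, I would locate the intersection of the two consecutive preimage lines. If $z\in g_R^{-q}(\Sigma)\cap g_R^{-(q-1)}(\Sigma)$, then $w:=g_R^{q-1}(z)$ satisfies $w\in\Sigma$ and $g_R(w)\in\Sigma$; writing $w=(0,w_2)$ and using $g_R(0,w_2)=(w_2+1,0)$ forces $w_2=-1$, whence $w=g_R^{-1}(\bO)$ and $z=g_R^{-q}(\bO)$. By \eqref{eq:paramCond3} this unique intersection point has strictly negative second coordinate, so it cannot equal $x$. Thus near $x$ the boundary of $E_q$ coincides with a single segment of one preimage line, which by the previous paragraph is non-horizontal; combining the two observations yields the lemma. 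Invertibility of $g_R$, needed for the preimages to be well defined, is guaranteed by \eqref{eq:paramCond2}.

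The hard part will be the corner analysis, that is, verifying that two consecutive preimage lines can meet only along the backward orbit of $\bO$; once this is established, condition \eqref{eq:paramCond3} immediately pushes every potential corner strictly below the $x_1$-axis and transversality follows. This parallels the proof of Lemma \ref{le:DiBoundary}, where the strict inequality $c_p<0$ plays the role now taken by \eqref{eq:paramCond3} in supplying the requisite strict negativity for the sets $E_q$.
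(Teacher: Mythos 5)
Your proof is correct, and it takes a genuinely more direct route than the paper's. The paper proves Lemma \ref{le:EiBoundary} jointly with Lemma \ref{le:EiPhiR} in Appendix \ref{app:Ei} by completely characterising every $E_q$ --- two-ray wedges with apex $g_R^{-q}(\bO)$ for $q \le q^*$, a three-sided region for $q = q^*+1$, and triangles with vertices $g_R^{-q}(\bO)$, $(0,d_{q-1})$, $(0,d_q)$ for larger $q$ --- and then reads off that no edge lies along, and no vertex lies on, the positive $x_1$-axis. You instead extract exactly the two facts needed: consecutive preimage lines $g_R^{-(q-1)}(\Sigma)$ and $g_R^{-q}(\Sigma)$ can meet only at $g_R^{-q}(\bO)$ (your computation forcing $w = (0,-1) = g_R^{-1}(\bO)$ is exactly right), which \eqref{eq:paramCond3} places strictly below the $x_1$-axis, so no corner of $\partial E_q$ can occur at $x$; and each line $g_R^{-k}(\Sigma)$ with $k \ge 1$ contains $g_R^{-k}(\bO)$, whose second coordinate is negative, hence the line is non-horizontal and meets $g(\Sigma) = \left\{ x \in \mathbb{R}^2 \,\middle|\, x_2 = 0 \right\}$ transversally. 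This buys brevity and independence from the case analysis. The one caveat is that you take as given from \sect\ref{sec:leftright} that $\partial E_q$ is contained in the three lines $g_R^{-q}(\Sigma)$, $g_R^{-(q-1)}(\Sigma)$, $\Sigma$; a priori one only gets $\partial E_q \subset \bigcup_{i=0}^{q} g_R^{-i}(\Sigma)$, and your corner computation does not extend verbatim to non-consecutive preimage lines (for indices differing by two or more the intersection point need not lie on the backward orbit of $\bO$). The paper's full characterisation is, in effect, how it discharges that containment, so a fully self-contained version of your argument would still need to address it; as a proof relying on the boundary structure already stated in \sect\ref{sec:leftright}, however, it is sound. (The identification $g^{-k}(\bO) = g_R^{-k}(\bO)$ that you invoke to apply \eqref{eq:paramCond3} is the short induction the paper sketches after stating that condition.)
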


\section{Proof of Theorem \ref{th:main}}
\label{sec:proof}
\setcounter{equation}{0}

The proof is divided into six steps.

\myStep{1}{Robustness of the cone.}
Let $C$ be a contracting-invariant, expanding cone for $\bM_\Omega$ of (\ref{eq:bM})
and let $c > 1$ be a corresponding expansion factor as in Definition \ref{df:ciec}.
Here we show that $C$ is an invariant expanding cone for
a collection of matrices $\hat{A}_R^q \hat{A}_L^p$
that are sufficiently close to some matrix $A_R^q A_L^p$ in $\bM_\Omega$.
Note, the expansion factor will be $\frac{c+1}{2}$ instead of $c$.

We first define a function $\mathcal{F}$ from the space of $2 \times 2$ matrices
to the space of subsets of $\mathbb{R}^2$ as follows:
given a $2 \times 2$ matrix $N$, let
\begin{equation}
\mathcal{F}(N) = \left\{ \frac{N v}{\| v \|} \,\middle|\, v \in C \setminus \{ \bO \} \right\}.
\nonumber
\end{equation}
By the definition of an contracting-invariant expanding cone,
given $M \in \bM_\Omega$ we have
$\mathcal{F}(M) \subset {\rm int}(C)$ and $\mathcal{F}(M) \cap B_c(\bO) = \varnothing$.
The function $\mathcal{F}$ is continuous, so if $N$ is sufficiently close to $M$ then
$\mathcal{F}(N) \subset C$ and $\mathcal{F}(N) \cap B_{\frac{c+1}{2}}(\bO) = \varnothing$.
That is, for all $v \in C$,
\begin{equation}
\begin{split}
N v &\in C, \\
\| N v \| &\ge \frac{c+1}{2} \| v \|.
\end{split}
\label{eq:robustCone}
\end{equation}
To be precise, there exists $\eta_1 > 0$ such that if $\| N - M \| < \eta_1$
for some $M \in \bM_\Omega$ then \eqref{eq:robustCone} is satisfied for all $v \in C$.
Furthermore, there exists $\eta_2>0$ such that $\| \hat{A}_L - A_L \| < \eta_2$ and $\| \hat{A}_R - A_R \| < \eta_2$ implies
\begin{equation}
\big\| \hat{A}_R^q \hat{A}_L^p - A_R^q A_L^p \big\| < \eta_1,
\label{eq:hatARqhatALp}
\end{equation}
for all $p_{\rm min} \le p \le p_{\rm max}$
and $q_{\rm min} \le q \le q_{\rm max}$,
and that any such $\hat{A}_L$ and $\hat{A}_R$ are invertible
(which is possible because $A_L$ and $A_R$ are invertible by \eqref{eq:paramCond1} and \eqref{eq:paramCond2}).

\myStep{2}{Bounds related to $g(\Omega)$.}
By the definition of $p_{\rm min}$, $p_{\rm max}$, $q_{\rm min}$, and $q_{\rm max}$,
see \eqref{eq:pMinCond}--\eqref{eq:qMaxCond},
we have $\Omega \cap \Phi_L \subset \bigcup_{p = p_{\rm min}}^{p_{\rm max}} D_p$
and $\Omega \cap \Phi_R \subset \bigcup_{q = q_{\rm min}}^{q_{\rm max}} E_q$.
In this step we use the results of \sect\ref{sec:leftright} and the
fact that $\Omega$ maps to its interior under $g$
to control the behaviour of points inside and near $g(\Omega)$.

Since $g(\Omega) \subset {\rm int}(\Omega)$ is compact
there exists $\ee_1 > 0$ such that 
\begin{equation}
B_{\ee_1}(g(\Omega)) \subset \Omega,
\label{eq:ee1Omega}
\end{equation}
as illustrated in Fig.~\ref{fig:zm_gHatOmega}.
Now consider the set 
$U = B_{\ee_1} \left( g(\Omega) \cap \Phi_L \right) \cap \Pi_L$.
This set is contained in $\Omega \cap \Phi_L$ except has some points just above the negative $x_1$-axis.
By Lemma \ref{le:DiBoundary}, we can assume $\ee_1$ is small enough
that $U$ does not intersect any `other' regions $D_p$, that is
$U \subset \bigcup_{p = p_{\rm min}}^{p_{\rm max}} D_p$.
Further, by shrinking this set by a small amount, the result will be bounded away from the other regions $D_p$.
That is, there exists $\ee_2 > 0$ such that
for all $x \in B_{\frac{\ee_1}{2}} \left( g(\Omega) \cap \Phi_L \right) \cap \Pi_L$:
\begin{enumerate}
\setlength{\itemsep}{0pt}
\item
$g_L^p(x)_1 < -\ee_2$ for all $p = 1,\ldots,p_{\rm min}-1$, and
\item
if $g_L^p(x)_1 \le 0$ for all $p = p_{\rm min},\ldots,p_{\rm max}-1$, then $g_L^{p_{\rm max}}(x)_1 > \ee_2$.
\end{enumerate}
Also, $\Omega \cap \Pi_L \subset \bigcup_{p = 1}^{p_{\rm max}} D_p$ by \eqref{eq:pMaxCond}.
Thus by \eqref{eq:ee1Omega} we can assume $\ee_2$ is small enough that for all
$x \in B_{\frac{\ee_1}{2}}(g(\Omega))$:
\begin{enumerate}
\setlength{\itemsep}{0pt}
\addtocounter{enumi}{2}
\item
if $g_L^p(x)_1 \le 0$ for all $p = 1,\ldots,p_{\rm max}-1$, then $g_L^{p_{\rm max}}(x)_1 > \ee_2$.
\end{enumerate}
In view of Lemma \ref{le:EiBoundary} we can assume $\ee_1$ and $\ee_2$ are small enough
that analogous bounds also hold for $g_R^q(x)_1$.

\begin{figure}[b!]
\begin{center}
\includegraphics[height=6cm]{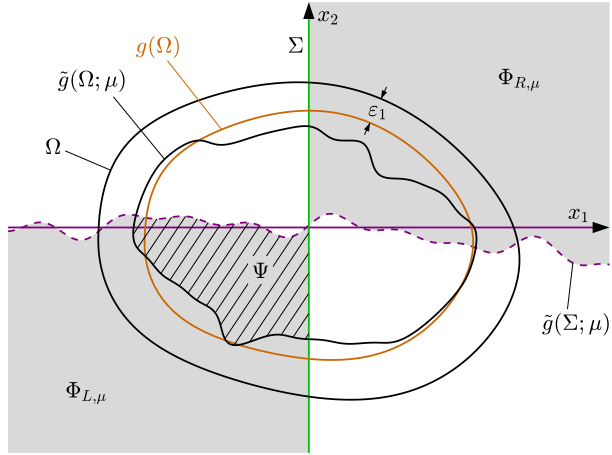}
\caption{
A sketch of $\Omega$ and its image $g(\Omega)$ illustrating the bound \eqref{eq:ee1Omega}.
For the perturbed map $\tilde{g}$ we also sketch the regions $\Phi_{L,\mu}$ and $\Phi_{R,\mu}$ (shaded) introduced in Step 4.
The part of $\tilde{g}(\Omega;\mu)$ that lies in $\Phi_{L,\mu}$ is the set $\Psi$ (striped) introduced in Step 5.
\label{fig:zm_gHatOmega}
}
\end{center}
\end{figure}

\myStep{3}{Change of coordinates.}
Here we apply the coordinate change \eqref{eq:ytox} for converting $f$ to $g$ plus higher order terms.
For small $\mu > 0$ this coordinate change represents a spatial blow-up of phase space near the origin.
Since the higher order terms are small near the origin
we are able to control the higher order terms by assuming $\mu$ is sufficiently small.
Here we also let $r > 0$ be such that $\Omega \subset B_r(\bO)$.

We first decompose \eqref{eq:ytox} into the spatial blow-up
\begin{equation}
x = \frac{\tilde{y}}{\gamma \mu},
\label{eq:blowUp}
\end{equation}
and the bounded coordinate change
\begin{equation}
\tilde{y} = \phi_\mu(y) = \begin{bmatrix}
y_1 \\ -a_{22} y_1 + a_{12} y_2 + (a_{22} b_1 - a_{12} b_2) \mu
\end{bmatrix}.
\label{eq:coordChange}
\end{equation}
Notice \eqref{eq:coordChange} is invertible because $a_{12} \ne 0$, \eqref{eq:transCond1}.
The coordinate change \eqref{eq:coordChange} transforms $f$ to a map
\begin{equation}
\tilde{f}(\tilde{y};\mu) = \begin{cases}
\tilde{f}_L(\tilde{y};\mu), & \tilde{y}_1 \le 0, \\
\tilde{f}_R(\tilde{y};\mu), & \tilde{y}_1 \ge 0,
\end{cases}
\label{eq:fTilde}
\end{equation}
where
\begin{equation}
\begin{split}
\tilde{f}_L(\tilde{y};\mu) &= A_L \tilde{y} + \begin{bmatrix} \gamma \mu \\ 0 \end{bmatrix} + E_L(\tilde{y};\mu), \\
\tilde{f}_R(\tilde{y};\mu) &= A_R \tilde{y} + \begin{bmatrix} \gamma \mu \\ 0 \end{bmatrix} + E_R(\tilde{y};\mu).
\end{split}
\label{eq:fTildeLR}
\end{equation}
The higher order terms $E_L$ and $E_R$ are $C^1$ and $\co \left( \| \tilde{y} \| + |\mu| \right)$.
Thus given any $\ee > 0$ there exists $\delta = \delta(\ee) > 0$ such that
\begin{equation}
\begin{split}
\frac{\left\| E_L(\tilde{y};\mu) \right\|}{\| \tilde{y} \| + \mu} &< \frac{\gamma \ee}{\gamma r + 1},
\qquad \text{for all}~ (\tilde{y};\mu) \in B_{\delta \gamma r}(\bO) \times (0,\delta), \\
\frac{\left\| E_R(\tilde{y};\mu) \right\|}{\| \tilde{y} \| + \mu} &< \frac{\gamma \ee}{\gamma r + 1},
\qquad \text{for all}~ (\tilde{y};\mu) \in B_{\delta \gamma r}(\bO) \times (0,\delta),
\end{split}
\label{eq:ELERbounds}
\end{equation}
and
\begin{equation}
\begin{split}
\big\| \rD \tilde{f}_L(\tilde{y};\mu) - A_L \big\| &< \eta_2 \,,
\qquad \text{for all}~ (\tilde{y};\mu) \in B_{\delta \gamma r}(\bO) \times (0,\delta), \\
\big\| \rD \tilde{f}_R(\tilde{y};\mu) - A_R \big\| &< \eta_2 \,,
\qquad \text{for all}~ (\tilde{y};\mu) \in B_{\delta \gamma r}(\bO) \times (0,\delta).
\end{split}
\label{eq:DtildefLRbounds}
\end{equation}
Then, since $\gamma > 0$ \eqref{eq:transCond2}, the spatial blow-up \eqref{eq:blowUp}
converts \eqref{eq:fTilde} with $\mu > 0$ to a map of the form
\begin{equation}
\tilde{g}(x;\mu) = \begin{cases}
\tilde{g}_L(x;\mu), & x_1 \le 0, \\
\tilde{g}_R(x;\mu), & x_1 \ge 0,
\end{cases}
\label{eq:gHat}
\end{equation}
where $\tilde{g}_L(x;\mu) = \frac{1}{\gamma \mu} \tilde{f}_L(\gamma \mu x;\mu)$
and $\tilde{g}_R(x;\mu) = \frac{1}{\gamma \mu} \tilde{f}_R(\gamma \mu x;\mu)$.
By \eqref{eq:fTildeLR}--\eqref{eq:DtildefLRbounds} we have
\begin{equation}
\begin{split}
\left\| \tilde{g}_L(x;\mu) - g_L(x) \right\| &< \ee,
\qquad \text{for all}~ (x;\mu) \in B_r(\bO) \times (0,\delta), \\
\left\| \tilde{g}_R(x;\mu) - g_R(x) \right\| &< \ee,
\qquad \text{for all}~ (x;\mu) \in B_r(\bO) \times (0,\delta),
\end{split}
\label{eq:hatgLRbounds}
\end{equation}
and
\begin{equation}
\begin{split}
\left\| \rD \tilde{g}_L(x;\mu) - A_L \right\| &< \eta_2 \,,
\qquad \text{for all}~ (x;\mu) \in B_r(\bO) \times (0,\delta), \\
\left\| \rD \tilde{g}_R(x;\mu) - A_R \right\| &< \eta_2 \,,
\qquad \text{for all}~ (x;\mu) \in B_r(\bO) \times (0,\delta).
\end{split}
\label{eq:DhatgLRbounds}
\end{equation}
Further assume $\ee \le \ee_1$ so that, by \eqref{eq:ee1Omega},
$\Omega$ is a trapping region for $\tilde{g}(x;\mu)$ with any $\mu \in (0,\delta)$.
That $\Omega$ is a trapping region implies $\tilde{g}$ has a topological attractor $\Gamma_\mu \subset \Omega$.
Then $\Lambda_\mu = \phi_\mu^{-1} \left( \gamma \mu \Gamma_\mu \right)$ is the corresponding attractor of $f$.
Since $\gamma \mu \Gamma_\mu \subset B_{\gamma \mu r}(\bO)$
and $\phi_\mu^{-1}(\tilde{y})$ is a linear function of the pair $(\tilde{y};\mu)$,
there exists $s > 0$ such that $\Lambda_\mu \subset B_{\mu s}(\bO)$ for all $\mu \in (0,\delta)$.

\myStep{4}{Extend bounds on $g$ to the perturbed map $\tilde{g}$.}
In Step 2 we obtained bounds on the number of iterations
required for orbits of the BCNF $g$ to cross $\Sigma$.
Here we show the same bounds hold for the perturbed map $\tilde{g}$.

We first extend the definitions of $\chi_L$ and $\chi_R$ to $\tilde{g}$.
Given $x \in \mathbb{R}^2$,
let $\chi_{L,\mu}(x)$ be the smallest $p \ge 1$ for which $\tilde{g}^p(x;\mu) \notin \Pi_L$ and
let $\chi_{R,\mu}(x)$ be the smallest $q \ge 1$ for which $\tilde{g}^q(x;\mu) \notin \Pi_R$,
if such $p$ and $q$ exist.
In view of Lemma \ref{le:crossingSigma},
we can similarly generalise $\Phi_L$ and $\Phi_R$ by defining
\begin{equation}
\begin{split}
\Phi_{L,\mu} &= \left\{ x \in \mathbb{R}^2 \,\big|\, x_1 < 0,\, \tilde{g}^{-1}(x;\mu)_1 \ge 0 \right\}, \\
\Phi_{R,\mu} &= \left\{ x \in \mathbb{R}^2 \,\big|\, x_1 > 0,\, \tilde{g}^{-1}(x;\mu)_1 \le 0 \right\}.
\end{split}
\label{eq:PhiLRmu}
\end{equation}
These sets are sketched in Fig.~\ref{fig:zm_gHatOmega}
and for sufficiently small $\mu < 0$ they are within
$\frac{\ee_1}{2}$ of $\Phi_L$ and $\Phi_R$ in the bounded set $\Omega$.
Next, take $0 < \ee < \frac{\ee_1}{2}$ small enough so that, by \eqref{eq:hatgLRbounds},
$\tilde{g}(\Omega;\mu) \cap \Phi_{L,\mu} \subset B_{\frac{\ee_1}{2}} \left( g(\Omega) \cap \Phi_L \right)$ and
$\tilde{g}(\Omega;\mu) \cap \Phi_{R,\mu} \subset B_{\frac{\ee_1}{2}} \left( g(\Omega) \cap \Phi_R \right)$
for all $\mu \in (0,\delta)$.
For small enough $\ee > 0$, \eqref{eq:hatgLRbounds} also implies
\begin{equation}
\begin{split}
\left\| \tilde{g}_L^p(x;\mu) - g_L^p(x) \right\| &< \ee_2,
\qquad \text{for all}~ (x;\mu) \in \Omega \times (0,\delta) ~\text{and all}~ p = 1,2,\ldots,p_{\rm max} \,, \\
\left\| \tilde{g}_R^q(x;\mu) - g_R^q(x) \right\| &< \ee_2,
\qquad \text{for all}~ (x;\mu) \in \Omega \times (0,\delta) ~\text{and all}~ q = 1,2,\ldots,q_{\rm max} \,.
\end{split}
\label{eq:hatgLRbounds2}
\end{equation}
Then by (i) and (ii) of Step 2, and analogous bounds on $g_R^q(x)_1$,
\begin{equation}
\begin{split}
p_{\rm min} &\le \chi_{L,\mu}(x) \le p_{\rm max}, \qquad
\text{for all}~ (x;\mu) \in \tilde{g}(\Omega;\mu) \cap \Phi_{L,\mu} \times (0,\delta), \\
q_{\rm min} &\le \chi_{R,\mu}(x) \le q_{\rm max}, \qquad
\text{for all}~ (x;\mu) \in \tilde{g}(\Omega;\mu) \cap \Phi_{L,\mu} \times (0,\delta).
\end{split}
\label{eq:chiLRmubounds}
\end{equation}

\myStep{5}{Construct an induced map $F$.}
Let
\begin{equation}
\Psi = \tilde{g}(\Omega;\mu) \cap \Phi_{L,\mu} \,,
\nonumber
\end{equation}
as indicated in Fig.~\ref{fig:zm_gHatOmega}.
In this step we introduce an {\em induced map} $F : \Psi \to \Psi$
that provides the first return to $\Psi$ under iterations of $\tilde{g}$.	

Fix $\mu \in (0,\delta)$.
Points in $\Psi$
map to $\tilde{g}(\Omega;\mu) \cap \Phi_{R,\mu}$ under $\chi_{L,\mu}$ iterations of $\tilde{g}_L$.
Similarly, points in $\tilde{g}(\Omega;\mu) \cap \Phi_{R,\mu}$ map to
$\Psi$ under $\chi_{R,\mu}$ iterations of $\tilde{g} = \tilde{g}_R$.
Consequently we can define $F : \Psi \to \Psi$ by
\begin{equation}
F(x) = \tilde{g}_R^q \big( \tilde{g}_L^p(x;\mu); \mu \big),
\label{eq:F}
\end{equation}
where $p = \chi_{L,\mu}(x)$
and $q = \chi_{R,\mu} \left( \tilde{g}_L^p(x;\mu) \right)$.
Let $\Sigma_\infty = \left\{ x \in \mathbb{R}^2 \,\middle|\, \tilde{g}^i(x) \in \Sigma \text{~for some~} i \ge 0 \right\}$
be the set of all points whose forward orbits under $\tilde{g}$ intersect $\Sigma$.
Then $\rD F(x;\mu)$ is well-defined at any $x \in \Psi \setminus \Sigma_\infty$.
By \eqref{eq:hatARqhatALp} and \eqref{eq:DhatgLRbounds},
\begin{equation}
\left\| \rD F(x;\mu) - A_R^q A_L^p \right\| < \eta_1 \,,
\label{eq:DFbound}
\end{equation}
where $p$ and $q$ are as in \eqref{eq:F}.
Note \eqref{eq:DFbound} also relies on the bounds
$p_{\rm min} \le p \le p_{\rm max}$
and $q_{\rm min} \le q \le q_{\rm max}$ provided by \eqref{eq:chiLRmubounds}.

\myStep{6}{Bound the Lyapunov exponent.}
Finally we verify \eqref{eq:liminf}.
Choose any $x \in \Omega \setminus \Sigma_\infty$.
We have $\tilde{g}(x;\mu) \in B_{\frac{\ee_1}{2}}(g(\Omega))$ by \eqref{eq:hatgLRbounds} because $\ee \le \frac{\ee_1}{2}$.
Then by \eqref{eq:hatgLRbounds2} and (i)--(iii) of Step 2,
there exists $k \le p_{\rm max} + q_{\rm max} + 1$ such that $\tilde{g}^k(x;\mu) \in \Psi$.
Let $x^{(0)} = \tilde{g}^k(x;\mu)$.
Also let $u^{(0)} \in C \setminus \{ \bO \}$
and $u = \left( \rD \tilde{g}^k(x;\mu) \right)^{-1} u^{(0)}$
(the inverse exists by the last remark in Step 1).

For each $j \ge 1$, let
$x^{(j)} = F \left( x^{(j-1)} \right)$
and let $p_{j-1}$ and $q_{j-1}$ be the corresponding $p$ and $q$ values in \eqref{eq:F}.
Then for all $j \ge 1$ we have $x^{(j)} = \tilde{g}^{n_j}(x;\mu)$
where $n_j = k + (p_0 + q_0) + (p_1 + q_1) + \cdots + (p_{j-1} + q_{j-1})$.
For all $j \ge 1$, let $u^{(j)} = \rD F \left( x^{(j-1)} \right) u^{(j-1)}$.
By \eqref{eq:robustCone} and \eqref{eq:DFbound} and an inductive argument on $j$,
we have $u^{(j)} \in C$ and $\| u^{(j)} \| > \frac{c+1}{2} \| u^{(j-1)} \|$
for all $j \ge 1$.
Hence $\| u^{(j)} \| > \left( \frac{c+1}{2} \right)^j \| u^{(0)} \|$.
Since $u^{(j)} = \rD \tilde{g}^{n_j}(x;\mu) u$, we have
\begin{align}
\liminf_{n \to \infty} \frac{1}{n} \ln \left( \left\| \rD \tilde{g}^n(x;\mu) u \right\| \right)
&= \liminf_{j \to \infty} \frac{1}{n_j} \ln \left( \left\| u^{(j)} \right\| \right) \nonumber \\
&\ge \liminf_{j \to \infty} \frac{1}{k + j (p_{\rm max} + q_{\rm max})} \ln \left( \left( \tfrac{c+1}{2} \right)^j \| u^{(0)} \| \right) \nonumber \\
&= \frac{\ln \left( \frac{c+1}{2} \right)}{p_{\rm max} + q_{\rm max}} > 0. \nonumber
\end{align}
Since the coordinate transformation $\phi_\mu$ is invertible,
the same bound applies to $f$ with $y = \phi^{-1}_\mu(\gamma \mu x)$
and $v = \left( \rD \phi_\mu(y) \right)^{-1} u$, i.e.~\eqref{eq:liminf}.
Since this applies to any $x \in \Omega \setminus \Sigma_\infty$,
where $\Sigma_\infty$ has zero Lebesgue measure,
\eqref{eq:liminf} holds for Lebesgue almost all $y \in \Lambda_\mu$.
\hfill $\Box$

\section{An application to power converters}
\label{sec:application}
\setcounter{equation}{0}

Power converters take a raw input voltage and use control strategies
to produce an output that is close to a desired voltage \cite{BaVe01,Ts03}.
These have applications in many areas including personal electronic equipment
where the voltage from a domestic electricity supplier is different from the voltage required by the device.
A prototypical DC/DC power converter model described in \cite{ZhMo03,ZhMo08b}
is written in terms of time-dependent variables $X(t)$ and $Y(t)$ that represent linear combinations
of an internal current and voltage as
\begin{equation}
\begin{split}
\frac{d X}{d t} &= \lambda_1 \big( X - H(\xi(\lfloor t \rfloor) - \eta(t)) \big), \\
\frac{d Y}{d t} &= \lambda_2 \big( Y - H(\xi(\lfloor t \rfloor) - \eta(t)) \big),
\end{split}
\label{eq:odes}
\end{equation}
where $H$ is the Heaviside function and
\begin{align}
\xi(t) &= X(t) - \theta Y(t) + \frac{q}{2 \omega}, \label{eq:xi} \\
\eta(t) &= \frac{q}{\alpha \omega} \big( t - \lfloor t \rfloor \big). \label{eq:eta} 
\end{align}
The function $\xi(t)$ is the control signal employed by the converter.
In this model time has been scaled so that the switching period is $1$, see \sect 5.2 of \cite{ZhMo03} for more details.
The floor function $\lfloor t \rfloor$ denotes the largest integer less than or equal to $t$.

Here we fix
\begin{equation}
\begin{split}
\lambda_1 &= -0.977, \\
\lambda_2 &= -0.232, \\
q &= 35.606, \\
\theta &= 4.2, \\
\alpha &= 70,
\end{split}
\label{eq:ZhMo08b_params}
\end{equation}
and vary the value of $\omega$
which represents input voltage and is a controllable parameter.
The values \eqref{eq:ZhMo08b_params} are as given in \cite{ZhMo08b}
except we have used a slightly larger value of $\alpha$
so that the border-collision bifurcation produces a chaotic attractor instead of an invariant torus.

\begin{figure}[b!]
\begin{center}
\includegraphics[height=6cm]{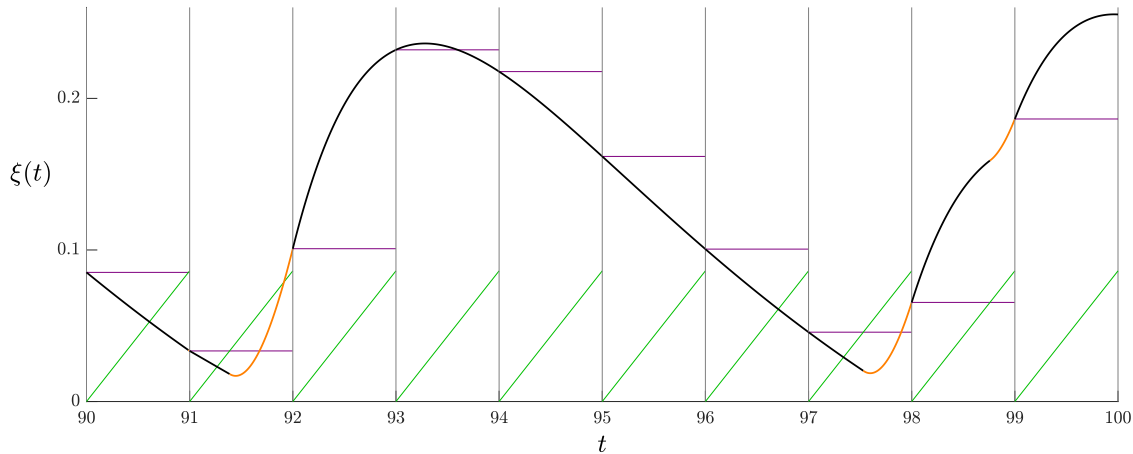}
\caption{
A time series of \eqref{eq:odes} with \eqref{eq:ZhMo08b_params} and $\omega = 5.9$
showing the control signal $\xi(t)$ on the vertical axis.
Also $\xi(\lfloor t \rfloor)$ is shown in purple and $\eta(t)$ is shown in green.
Where $\xi(\lfloor t \rfloor) > \eta(t)$ we have $H = 1$ in \eqref{eq:odes} and the time series is coloured black;
where $\xi(\lfloor t \rfloor) < \eta(t)$ we have $H = 0$ in \eqref{eq:odes} and the time series is coloured orange.
\label{fig:zm_timeSeries}
}
\end{center}
\end{figure}

Fig.~\ref{fig:zm_timeSeries} shows a typical time series of \eqref{eq:odes}.
On the vertical axis we have plotted $\xi(t)$ which is an affine function of the variables.
The system switches from $H=1$ (black) to $H=0$ (orange)
when the purple line $\xi(\lfloor t \rfloor)$ meets the green line $\eta(t)$, and switches back to $H=1$ at integer times.

We now provide a stroboscopic map that captures the dynamics of \eqref{eq:odes}.
This map is given in \cite{ZhMo08b} and is straight-forward to derive.
Let $w^{(n)} = (X(n),Y(n))$, for $n \in \mathbb{Z}$, denote the solution to \eqref{eq:odes} at integer times.
The stroboscopic map, $w^{(n+1)} = p \left( w^{(n)} \right)$, is
\begin{equation}
p(w) =
\begin{bmatrix} \re^{\lambda_1}(w_1-1) + \re^{\lambda_1(1-z)} \\ \re^{\lambda_2}(w_2-1) + \re^{\lambda_2(1-z)} \end{bmatrix},
\label{eq:stroboscopicMap}
\end{equation}
where
\begin{equation}
z = \begin{cases}
0, & \varphi \le 0, \\
\frac{\alpha \omega \varphi}{q}, & 0 \le \varphi \le \frac{q}{\alpha \omega}, \\
1, & \varphi \ge \frac{q}{\alpha \omega},
\end{cases}
\label{eq:z}
\end{equation}
and
\begin{equation}
\varphi = w_1 - \theta w_2 + \frac{q}{2 \omega}.
\label{eq:varphi}
\end{equation}
Fig.~\ref{fig:zm_bifDiag} shows a bifurcation diagram of \eqref{eq:stroboscopicMap}.
As the value of $\omega$ is increased,
a stable fixed point undergoes a border-collision bifurcation at
\begin{equation}
\omega_{\rm BCB} = \frac{q}{1 - \theta} \left( \frac{1}{\alpha} - \frac{1}{2} \right) \approx 5.4045.
\label{eq:OmegaBCB}
\end{equation}
Numerical simulations suggest that a chaotic attractor is created in the border-collision bifurcation.
The numerically computed Lyapunov exponent remains positive until $\omega \approx 5.555$.
Fig.~\ref{fig:zm_qqStrobo}(a) shows a phase portrait of the chaotic attractor.

\begin{figure}[b!]
\begin{center}
\includegraphics[height=7.5cm]{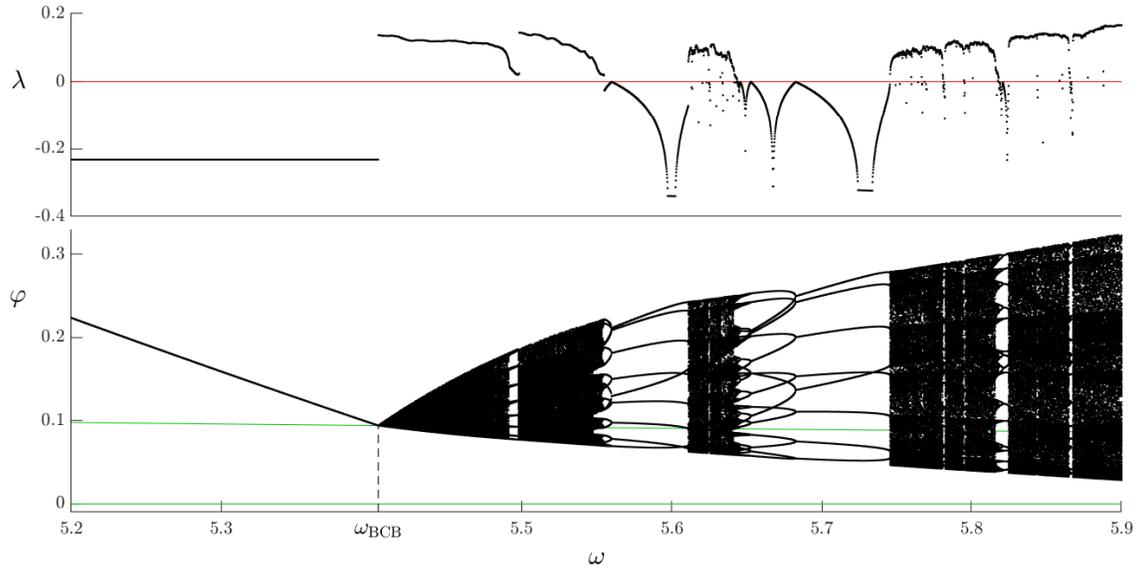}
\caption{
A bifurcation diagram (lower plot) of \eqref{eq:stroboscopicMap} with \eqref{eq:ZhMo08b_params}
and a numerically computed Lyapunov exponent $\lambda$ (upper plot).
A border-collision bifurcation occurs at $\omega = \omega_{\rm BCB} \approx 5.4045$.
For $5000$ different values of $\omega$ the Lyapunov exponent was estimated from $10^6$ iterates of one orbit.
\label{fig:zm_bifDiag}
}
\end{center}
\end{figure}

\begin{figure}[b!]
\begin{center}
\setlength{\unitlength}{1cm}
\begin{picture}(12,5)
\put(0,0){\includegraphics[height=5cm]{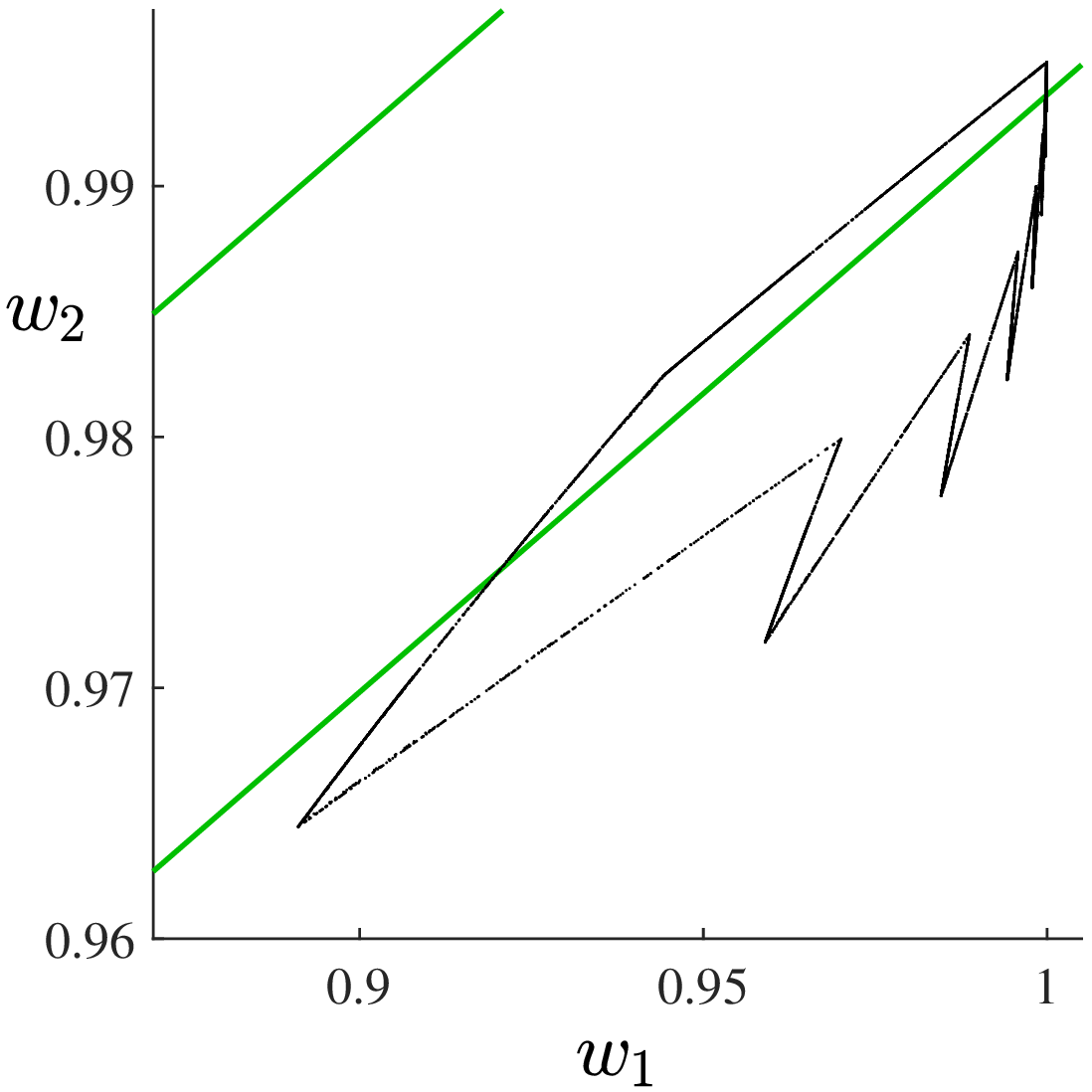}}
\put(7,0){\includegraphics[height=5cm]{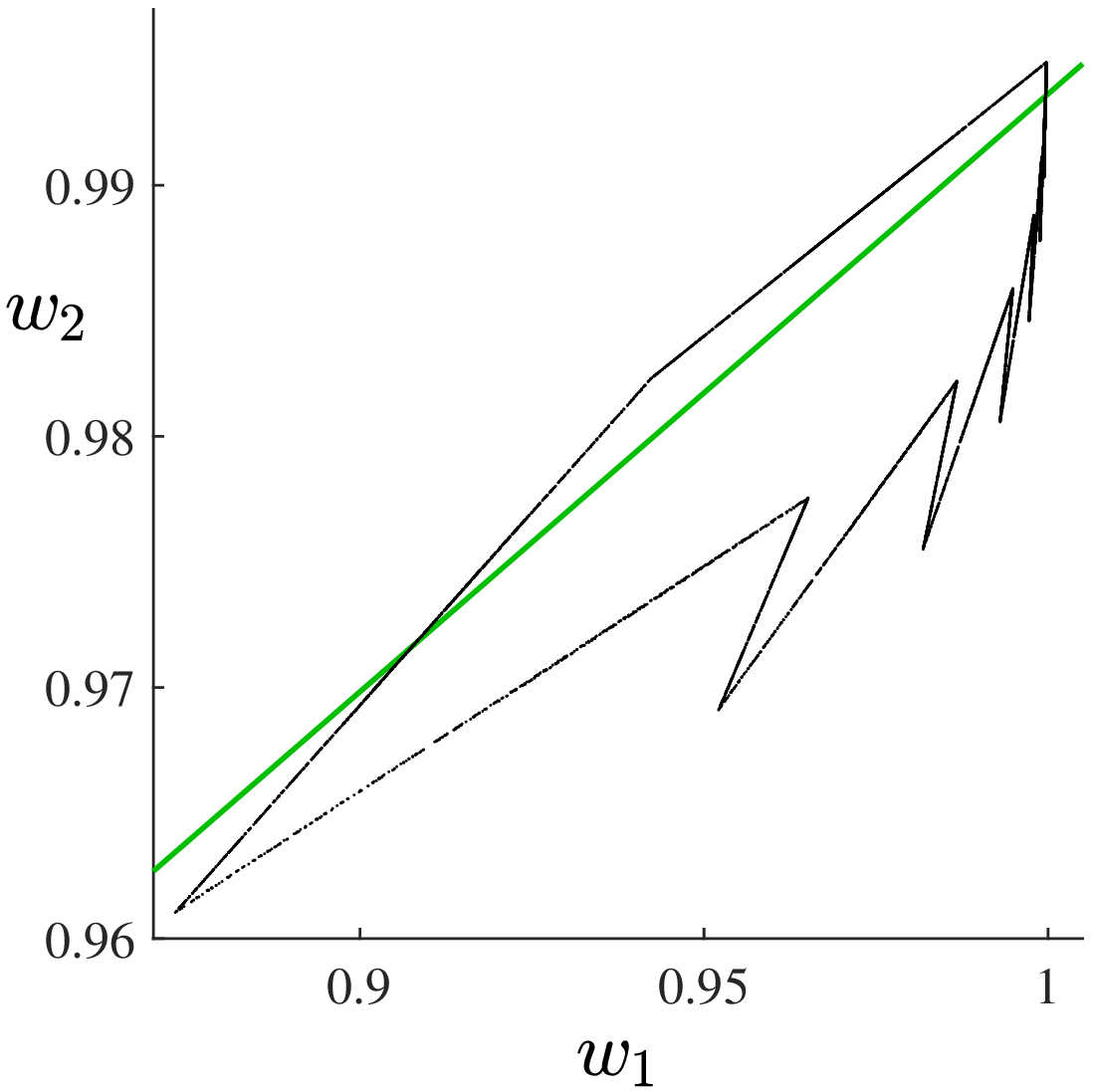}}
\put(0,4.8){\bf a)}
\put(7,4.8){\bf b)}
\end{picture}
\caption{
Panel (a) shows a phase portrait of \eqref{eq:stroboscopicMap} with \eqref{eq:ZhMo08b_params} and $\omega = 5.45$.
Specifically we show $10^4$ points of a forward orbit with the first $100$ points removed.
Panel (b) similarly shows part of the forward orbit of the corresponding two-dimensional BCNF,
except converted to the coordinates of the stroboscopic map.
\label{fig:zm_qqStrobo}
}
\end{center}
\end{figure}

The map \eqref{eq:stroboscopicMap} has two switching manifolds.
The border-collision bifurcation occurs on the switching manifold $\varphi = \frac{q}{\alpha \omega}$,
so for the purposes of analyzing the local dynamics associated with this bifurcation we can ignore the
$\varphi \le 0$ component of \eqref{eq:z}.
Upon performing the affine change of variables
\begin{equation}
y = \begin{bmatrix} \frac{q}{\alpha \omega} - \left( w_1 - \theta w_2 + \frac{q}{2 \omega} \right) \\ 1 - w_2 \end{bmatrix},
\label{eq:changeToGeneralMap}
\end{equation}
the map (without the $\varphi \le 0$ component of \eqref{eq:z}) takes the form \eqref{eq:f}.
Also let
\begin{equation}
\mu = \omega - \omega_{\rm BCB} \,,
\label{eq:changeToGeneralMap2}
\end{equation}
which with \eqref{eq:bcbCond} is satisfied,
i.e.~the border-collision bifurcation occurs at $y = \bO$ when $\mu = 0$.
By differentiating \eqref{eq:stroboscopicMap} and evaluating it at the border-collision bifurcation we obtain
\begin{equation}
\begin{split}
\tau_L &= \re^{\lambda_1} + \re^{\lambda_2}, \\
\delta_L &= \re^{\lambda_1 + \lambda_2}, \\
\tau_R &= \re^{\lambda_1} + \re^{\lambda_2} - \frac{\frac{\alpha}{2} - 1}{\theta - 1} \big( \lambda_1 - \theta \lambda_2 \big), \\
\delta_R &= \re^{\lambda_1 + \lambda_2} - \frac{\frac{\alpha}{2} - 1}{\theta - 1} \left( \lambda_1 \re^{\lambda_2} - \theta \lambda_2 \re^{\lambda_1} \right).
\end{split}
\label{eq:tLdLtRdRFormulas}
\end{equation}
By substituting \eqref{eq:ZhMo08b_params} and \eqref{eq:OmegaBCB} into \eqref{eq:tLdLtRdRFormulas} and rounding to four decimal places we obtain
\begin{equation}
\begin{split}
\tau_L &= 1.1694, \\
\delta_L &= 0.2985, \\
\tau_R &= 1.1970, \\
\delta_R &= 4.6325.
\end{split}
\label{eq:tLdLtRdRValues}
\end{equation}

\begin{figure}[b!]
\begin{center}
\includegraphics[height=5cm]{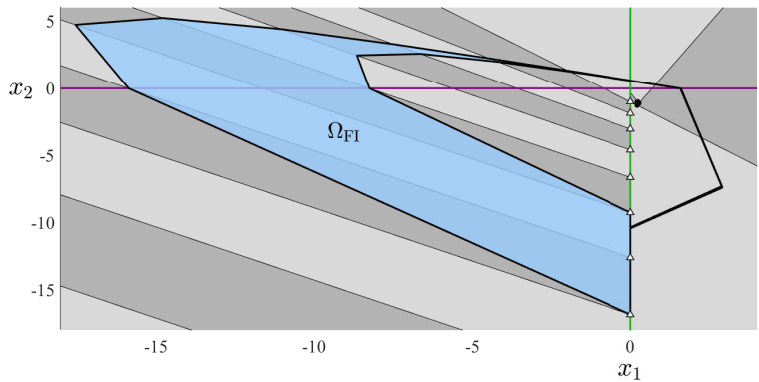}
\caption{
A forward invariant region $\Omega_{\rm FI}$ for the BCNF \eqref{eq:bcnf} with parameter values \eqref{eq:tLdLtRdRValues}
corresponding to the border-collision bifurcation of the power converter model.
The regions $D_p$ and $E_q$ are shaded as in Fig.~\ref{fig:zm_DpEq}.
\label{fig:zm_qqBCNF}
}
\end{center}
\end{figure}

We now show that this border-collision bifurcation satisfies the conditions of Theorem \ref{th:main}.
Certainly \eqref{eq:transCond1} is satisfied;
\eqref{eq:transCond2} is also satisfied due to signs choices made when constructing \eqref{eq:changeToGeneralMap}.
Conditions \eqref{eq:paramCond1}--\eqref{eq:paramCond3} are satisfied
with $p^* = \infty$ in Definition \ref{df:pStar} (the eigenvalues of $A_L$ are complex)
and $q^* = 2$ and $q^{**} = 3$ in Definition \ref{df:qStar}.

With \eqref{eq:tLdLtRdRValues} the BCNF $g$
has the forward invariant region $\Omega_{\rm FI}$ shown in Fig.~\ref{fig:zm_qqBCNF}.
This region was constructed using the algorithm of \cite{GlSi21b}
(specifically $\Omega_{\rm FI}$ is the union of images of a `recurrent' region $\Omega_{\rm rec}$).
For $\Omega_{\rm FI}$ we have
\begin{align}
p_{\rm min} &= 6, &
p_{\rm max} &= 8, &
q_{\rm min} &= 2, &
q_{\rm max} &= 3,
\label{eq:pqBounds}
\end{align}
in \eqref{eq:pMinCond}--\eqref{eq:qMaxCond}.
While $\Omega_{\rm FI}$ does not map to its interior, i.e.~only $g(\Omega_{\rm FI}) \subset \Omega_{\rm FI}$,
by generalising the approach used in \cite{Si20e} we have observed numerically that $\Omega_{\rm FI}$
can be shrunk by a small amount to produce a trapping region $\Omega \subset \Omega_{\rm FI}$
that necessarily satisfies \eqref{eq:pMinCond}--\eqref{eq:qMaxCond} with the same bounds on $p$ and $q$.

For the collection $\bM_\Omega$, the algorithm in \cite{GlSi21b} also produces the invariant expanding cone
\begin{equation}
C = \left\{ a \begin{bmatrix} \cos(\theta) \\ \sin(\theta) \end{bmatrix} \,\middle|\,
a \in \mathbb{R},\, \theta_0 \le \theta \le \theta_1 \right\},
\nonumber
\end{equation}
where $\theta_0 \approx 0.8062$ and $\theta_1 \approx 2.0227$.
By Proposition 8.1 of \cite{GlSi21b}, this cone can be enlarged slightly to produce
a cone that is contracting-invariant and expanding.

This shows that the border-collision bifurcation of the power converter model
satisfies the conditions of Theorem \ref{th:main}.
We can therefore conclude that the model has a chaotic attractor
for all $\omega_{\rm BCB} < \omega < \omega_{\rm BCB} + \delta$, for some $\delta > 0$.
Based on the numerically computed Lyapunov exponent shown in Fig.~\ref{fig:zm_bifDiag},
we could possibly take $\delta = 0.15$.
By inverting the coordinate changes required to go from the stroboscopic map $p$ to the BCNF $g$,
we were able to reproduce the attractor of $g$ with \eqref{eq:tLdLtRdRValues}
in the coordinates of $p$, and this shown in Fig.~\ref{fig:zm_qqStrobo}(b).

\section{Discussion}
\label{sec:conc}
\setcounter{equation}{0}

The border-collision normal form has attracted a great deal of attention
because it acts as a paradigm for the dynamics of general piecewise-smooth systems,
and because it has a broad variety of applications.
In both contexts it is important to know which features of the dynamics are particular
to the piecewise-linear character of the BCNF,
and which are persistent features of piecewise-smooth systems.
Certainly in applications this question is fundamental to the interpretation of results. 

In this paper we have established techniques that can be used to prove that a
chaotic attractor observed in the BCNF persists with the addition of nonlinear terms close to a border-collision bifurcation.
The techniques are sufficiently simple that the conditions can be checked in explicit examples,
and we have achieved this for a prototypical power converter model.
This approach appears to be quite effective because it is possible
to use analytic (or simple, finite numerical calculations)
to prove the existence of chaotic attractors in the BCNF,
then use persistence arguments to infer the existence of chaotic attractors in the original system.
This eliminates the need to compute asymptotic quantities, such as Lyapunov exponents,
or to rely on a visual examination of numerically computed attractors.

For the power converter model,
the chaotic attractor appears to vary continuously (with respect to Hausdorff metric)
as $\omega$ is varied just past the border-collision bifurcation.
This is why the attractor of the BCNF shown in Fig.~\ref{fig:zm_qqStrobo}(b)
closely resembles that of the power converter shown in Fig.~\ref{fig:zm_qqStrobo}(a).
It remains to determine general conditions that ensure continuity,
perhaps by employing the result of Hoang {\em et.~al.}~\cite{HoOl15}, see \cite{GlSi20b}.

This work feeds into a larger (and often unspoken) question about the BCNF: is it a 
normal form in the strict, bifurcation theory sense of the term \cite{Ku04}?
The dynamics of a (strict) normal form is equivalent to those of the original system
in some neighbourhood of parameter space and phase space.
In contrast, in Theorem \ref{th:main} the neighbourhood $B_{\mu s}(\bO)$ shrinks to a point at the bifurcation.
These concepts depend on the type of equivalence being used and the
result of this paper comes a step closer to showing that, with an appropriate definition of equivalence,
the BCNF can indeed be a normal form in this stronger, technical sense.

\section*{Acknowledgements}

The authors were supported by Marsden Fund contract MAU1809,
managed by Royal Society Te Ap\={a}rangi.

\appendix

\section{Calculations for the regions $E_i$}
\label{app:Ei}
\setcounter{equation}{0}

Here we prove Lemmas \ref{le:EiPhiR} and \ref{le:EiBoundary}.

We first show how the values of $n_q$ and $d_q$ can be computed iteratively.
By substituting $x_2 = n_{q-1} x_1 + d_{q-1}$, which represents $g_R^{-(q-1)}(\Sigma)$, into
\begin{equation}
g_R^{-1}(x) = \begin{bmatrix}
-\frac{1}{\delta_R} \,x_2 \\
x_1 + \frac{\tau_R}{\delta_R} \,x_2 - 1
\end{bmatrix},
\label{eq:gRinverse}
\end{equation}
we obtain
\begin{equation}
g_R^{-1} \left( \begin{bmatrix} x_1 \\ n_{q-1} x_1 + d_{q-1} \end{bmatrix} \right) =
\begin{bmatrix} -\frac{n_{q-1}}{\delta_R} \\ \frac{\tau_R n_{q-1}}{\delta_R} + 1 \end{bmatrix} x_1 +
\begin{bmatrix} -\frac{d_{q-1}}{\delta_R} \\ \frac{\tau_R d_{q-1}}{\delta_R} - 1 \end{bmatrix},
\label{eq:nidiConstruct}
\end{equation}
which represents $g_R^{-q}(\Sigma)$.
From \eqref{eq:nidiConstruct} we see that the slope and $x_2$-intercept of $g_R^{-q}(\Sigma)$ are
\begin{equation}
\begin{split}
n_q &= -\frac{\delta_R}{n_{q-1}} - \tau_R \,, \\
d_q &= -\frac{d_{q-1}}{n_{q-1}} - 1,
\end{split}
\label{eq:nidiRecurrenceRelation}
\end{equation}
assuming $n_{q-1} \ne 0$.
If $n_{q-1} = 0$, then $n_q = \infty$,
and from \eqref{eq:nidiConstruct} we see that $g_R^{-q}(\Sigma)$ is the
vertical line $x_1 = -\frac{d_{q-1}}{\delta_R}$.
Further, by substituting this into \eqref{eq:gRinverse} we obtain
\begin{equation}
g_R^{-1} \left( \begin{bmatrix} -\frac{d_{q-1}}{\delta_R} \\ x_2 \end{bmatrix} \right) =
\begin{bmatrix} -\frac{1}{\delta_R} \\ \frac{\tau_R}{\delta_R} \end{bmatrix} x_2 +
\begin{bmatrix} 0 \\ -\frac{d_{q-1}}{\delta_R} - 1 \end{bmatrix},
\nonumber
\end{equation}
and therefore in this case the slope and $x_2$-intercept of $g_R^{-(q+1)}(\Sigma)$ are
\begin{equation}
\begin{split}
n_{q+1} &= -\tau_R \,, \\
d_{q+1} &= -\frac{d_{q-1}}{\delta_R} - 1.
\end{split}
\label{eq:nidiSpecialCase}
\end{equation}
To obtain starting values for the iterations,
notice $g_R(\Sigma)$ is the $x_1$-axis, so $n_{-1} = d_{-1} = 0$.
By substituting these into \eqref{eq:nidiSpecialCase} we obtain $n_1 = -\tau_R$ and $d_1 = -1$.

In summary, starting with $n_1 = -\tau_R$ and $d_1 = -1$ we can use \eqref{eq:nidiRecurrenceRelation} to
iteratively generate $n_q$ and $d_q$ for all $q \ge 1$,
using instead \eqref{eq:nidiSpecialCase} for the special case $n_{q-1} = 0$.

We now prove Lemmas \ref{le:EiPhiR} and \ref{le:EiBoundary} together.
This is achieved by carefully characterising the regions $E_q$
and here the reader may find it helpful to refer to Fig.~\ref{fig:zm_DpEq}.

\begin{proof}[Proof of Lemmas \ref{le:EiPhiR} and \ref{le:EiBoundary}]
We first describe the backwards orbit of $\bO$ under $g_R$.
Notice $\bO \in \Sigma$ and $g_R^{-1}(\bO) = (0,-1) \in \Sigma$.
Thus, for all $q \ge 0$, the points $g_R^{-q}(\bO)$ and $g_R^{-(q+1)}(\bO)$ lie on $g_R^{-q}(\Sigma)$.
They are distinct points, hence $g_R^{-q}(\Sigma)$ is the unique line that passes through these points.
By \eqref{eq:paramCond2}--\eqref{eq:paramCond3},
\begin{equation}
g_R^{-q}(\bO) \in \{ x \in \mathbb{R}^2 \,\big|\, x_1 > 0,\, x_2 < 0 \}, \qquad
\text{for all $q \ge 2$}.
\label{eq:gRmibO}
\end{equation}
Thus for each $q \ge 2$ and any point $x \in \mathbb{R}^2$ sufficiently close to $g_R^{-q}(\bO)$,
we have $\chi_R(x) \ge q-1$ by the definition of $\chi_R$.
Further, there exist points arbitrarily close to $g_R^{-q}(\bO)$
such that $\chi_R(x) = q-1$.
Therefore
\begin{equation}
\begin{split}
g_R^{-q}(\bO) &\notin {\rm cl}(E_i), \qquad \text{for all $i = 1,\ldots,q-2$}, \\
g_R^{-q}(\bO) &\in \partial E_{q-1} \,,
\end{split}
\label{eq:gRmibO2}
\end{equation}
where ${\rm cl}(\cdot)$ denotes closure and $\partial$ denotes boundary.

Next we characterise the regions $E_q$ up to $q = q^*$.
By definition, $E_1$ consists of all $x \in \Pi_R$ for which $g_R(x)_1 < 0$.
We have $g_R(x)_1 = \tau_R x_1 + x_2 + 1$, therefore
\begin{equation}
E_1 = \left\{ x \in \Pi_R \,\big|\, x_1 \ge 0,\, x_2 < n_1 x_1 + d_1 \right\},
\label{eq:E1}
\end{equation}
recalling $n_1 = -\tau_R$ and $d_1 = -1$.
For each $q \ge 2$, $E_q$ consists of all $x \in \Pi_R$ for which $g_R(x) \in E_{q-1}$.
It follows that
\begin{equation}
E_q = g_R^{-1}(E_{q-1}) \cap \Pi_R \,, \qquad
\text{for all $q \ge 2$.}
\label{eq:Ei2}
\end{equation}
By \eqref{eq:E1}, $E_1$ is the region bounded by two rays emanating from $g_R^{-1}(\bO) = (0,-1)$.
One ray is part of $\Sigma$, the other ray is part of $g_R^{-1}(\Sigma)$ and contains the point $g_R^{-2}(\bO)$.
Then from \eqref{eq:gRmibO2} and \eqref{eq:Ei2}, for all $q \in \{ 1,\ldots, q^* \}$ the region
$E_q$ is bounded by two rays emanating from $g_R^{-q}(\bO)$.
One ray is part of $g_R^{-(q-1)}(\Sigma)$, the other ray is part of $g_R^{-q}(\Sigma)$ and contains the point $g_R^{-(q+1)}(\bO)$.
By \eqref{eq:gRmibO} and the definition of $q^*$,
$E_q \cap \Phi_R = \varnothing$ for all $q \in \{ 1,\ldots, q^*-1 \}$,
whereas $E_{q^*} \cap \Phi_R \ne \varnothing$.

Notice $E_{q^*}$ contains an infinite section of the positive $x_1$-axis.
The preimage of the $x_1$-axis under $g_R$ is the $x_2$-axis,
thus $E_{q^*+1}$ contains an infinite section of the positive $x_2$-axis.
Therefore $E_{q^*+1}$ has three boundaries:
\begin{enumerate}
\setlength{\itemsep}{0pt}
\item 
a ray (part of $g_R^{-q^*}(\Sigma)$ emanating from $g_R^{-(q^*+1)}(\bO)$),
\item
the line segment from $g_R^{-(q^*+1)}(\bO)$ to $\left( 0,d_{q^*+1} \right)$, and
\item
the part of $\Sigma$ above $\left( 0,d_{q^*+1} \right)$.
\end{enumerate}
Finally, if $q^{**} > q^*+1$, then for all $q \in \{ q^*+2,\ldots,q^{**} \}$,
$E_q$ is the triangle with vertices $g_R^{-q}(\bO)$, $(0,d_{q-1})$, and $(0,d_q)$.
This in part relies on the observation $d_{q^{**}} > -1$
which is a consequence of \eqref{eq:nidiRecurrenceRelation} and the definition of $q^{**}$.
For each $q \in \{ q^*+1,\ldots,q^{**} \}$ we have $E_q \cap \Phi_R \ne \varnothing$ because $d_{q-1} > 0$.
Our precise description of the $E_q$ implies
$\Phi_R \subset \bigcup_{q=q^*}^{q^{**}} E_q$, and this completes the proof Lemma \ref{le:EiPhiR}.
Further, we have shown that the boundaries of $E_q$ do not coincide with an interval the $x_1$-axis
or have vertices on the $x_1$-axis, and this completes the proof of Lemma \ref{le:EiBoundary}.
\end{proof}


\end{document}